\documentclass{article}

\usepackage{amsmath}
\usepackage{amsthm}
\usepackage{amsfonts}
\usepackage{amsbsy}
\usepackage{amssymb}
\bibliographystyle{amsplain}
\usepackage{pstricks}
\newtheorem{theorem}{Theorem}
\newtheorem{proposition}{Proposition}

\newtheorem{lemma}{Lemma}

\newcommand{\R}{{\mathbb R}}
\newcommand{\Z}{{\mathbb Z}}

\newcommand{\set}[2]{ \left\{ #1 \ \left| \ #2 \right. \right\} }

\newcommand{\real}{\mathop{\mathrm{Re}}}
\newcommand{\imaginary}{\mathop{\mathrm{Im}}}


\newcommand{\Qj}{Q_{ \! \! j}}

\newcommand{\scaled}{{\cal S}}
\newcommand{\one}{\mathbf 1}
\title{Rank and regularity for averages over submanifolds}
\author{Philip T. Gressman\footnote{Partially supported by NSF grant DMS-0653755.}}
\begin{document}
\maketitle
\begin{abstract}
This paper establishes endpoint $L^p-L^q$ and Sobolev mapping properties of Radon-like operators which satisfy a homogeneity condition (similar to semiquasihomogeneity) and a condition on the rank of a matrix related to rotational curvature.  For highly degenerate operators, the rank condition is generically satisfied for algebraic reasons, similar to an observation of Greenleaf, Pramanik, and Tang \cite{gpt2007} concerning oscillatory integral operators.
\end{abstract}

The purpose of this paper is to establish endpoint $L^p$-$L^q$ and Sobolev inequalities for a broad class of highly degenerate Radon-like averaging operators.  The literature relating to this goal is both broad and deep, including but not limited to the works of Bak, Oberlin, and Seeger \cite{bos2002}; Cuccagna \cite{cuccagna1996}; Greenblatt \cite{greenblatt2005}; Greenleaf and Seeger \cite{gs1994}, \cite{gs1998}; Lee \cite{lee2003}, \cite{lee2004}; Phong and Stein \cite{ps1997}; Phong, Stein, and Sturm \cite{pss2001}; Pramanik and Yang \cite{py2004}; Rychkov \cite{rychkov2001}; Seeger \cite{seeger1998}; and Tao and Wright \cite{tw2003}.  This literature provides a comprehensive theory of Radon transforms in the plane (optimal $L^p$-$L^q$ and Sobolev bounds were established by Seeger \cite{seeger1998} and others).  Tao and Wright \cite{tw2003} have also established sharp (up to $\epsilon$ loss) $L^p$-$L^q$ inequalities for completely general averaging operators over curves in any dimension.

In the remaining cases, though, little has been proved regarding optimal inequalities for Radon-like operators.  Among the reasons for this is that the rotational curvature (in the sense of Phong and Stein \cite{ps1986I}, \cite{ps1986II}) is essentially controlled by a scalar quantity for averaging operators in the plane, but is governed in higer dimensions (and higher codimension) by a matrix condition which is increasingly difficult to deal with using standard tools.  While it is generally impossible for rotational curvature to be nonvanishing in this case, the corresponding matrix can be expected to have nontrivial rank.  Under this assumption, works along the lines of Cuccagna \cite{cuccagna1996} and Greenleaf, Pramanik, and Tang \cite{gpt2007} have been able to use this weaker information as a replacement for nonvanishing rotational curvature.  In particular, Greenleaf, Pramanik, and Tang showed that optimal $L^2$-decay inequalities for ``generic'' oscillatory integral operators can be established in the highly degenerate case with only the knowledge that the corresponding matrix quantity has rank one or higher at every point away from the origin.  The purpose of this paper, then, is to explore and extend this phenomenon as it can be applied to the setting of Radon-like operators.

Fix positive integers $n'$ and $n''$, and let $S$ be a smooth mapping into $\R^{n''}$ which is defined on a neighborhood of the origin in $\R^{n'} \times \R^{n''} \times \R^{n'}$.  The purpose of this paper is to prove a range of sharp $L^p-L^q$ and Sobolev inequalities for the Radon-like operator defined by
\begin{equation}
T f(x',x'') := \int f(y',x'' + S(x',x'',y')) \psi(x',x'',y') dy', \label{theop}
\end{equation}
where $x',y' \in \R^{n'}$ and $x'' \in \R^{n''}$ ($n'$ represents the dimension of the manifolds over which $f$ is averaged, and $n''$ represents the codimension).  When no confusion arises, the variable $x$ will stand for the pair $(x',x'')$, and $n$ will refer to the sum $n' + n''$.

The assumption to be made on $S$ is that it exhibits a sort of approximate homogeneity (aka semiquasihomogeneity).  The notation to be used to describe this scaling will be as follows:  given any multiindex $\gamma := (\gamma_1,\ldots,\gamma_m)$ of length $m$, any $z := (z_1,\ldots,z_m) \in \R^m$, and any integer $j$, let $2^{j \gamma} z := (2^{j \gamma_1} z_1, \ldots 2^{j \gamma_m} z_m)$.  The entries of a multiindex will always be integers, but they will be allowed to be negative in situations where negative entries make sense.  The order of the multiindex $\gamma$ will be denoted $|\gamma|$, is the sum of the entries, i.e., $\gamma_1 + \cdots + \gamma_m$, and may be negative in some cases.

As for the mapping $S$, it will be assumed that there exist multiindices $\alpha'$ and $\beta'$ of length $n'$ and $\alpha''$ and $\beta''$ of length $n''$ such that the limit of
\begin{equation} \lim_{j \rightarrow \infty} 2^{j \beta''} S( 2^{-j \alpha'} x', 2^{-j \alpha''} x'', 2^{-j \beta'} y') =: S^{P}(x',x'',y')\label{homo1}
\end{equation}
as $j \rightarrow \infty$ exists and is a smooth function of $x'$, $x''$, and $y'$ which does not vanish identically (note that, given a smooth mapping $S$, there is always at least one choice of multiindices so that this condition holds).  Furthermore, it will be assumed that $\beta''_i > \alpha''_i$ for $i=1,\ldots,n''$.  The assumption on $\alpha''$ and $\beta''$ will together with \eqref{homo1} be referred to as the homogeneity conditions. 

As with the variable $x$, the multiindices $\alpha$ and $\beta$ of length $n$ will represent $(\alpha',\alpha'')$ and $(\beta',\beta'')$ respectively.  Although the mapping $S$ exhibits a weak sort of homogeneity, the second of the homogeneity conditions guarantees, in fact, that the averaging operator \eqref{theop} is {\it not} homogeneous.  For this reason, it turns out that there is more than one family of dilations that come into play in the study of \eqref{theop}.  To simplify the proofs somewhat, it will also be convenient to define $\tilde \alpha$ to represent $(\alpha',\beta'')$.

The main nondegeneracy condition to be used is stated as follows:  for each pair $(x,y')$ in the support of the cutoff $\psi$ in \eqref{theop} and each $\eta'' \in \R^{n''} \setminus \{ 0 \}$, consider the $n' \times n'$ mixed Hessian matrix $H^P$ whose $(i,j)$-entry is given by
\begin{equation} H^P_{ij} (x',x'',y',\eta'') := \frac{\partial^2}{\partial {x_i'} y_j'} \left( \eta'' \cdot S^P(x',x'',y') \right). \label{hessian}
\end{equation}
Throughout the paper, it will be assumed that there is a positive integer $r > 0$ such that, at each point $(x',x'',y') \neq (0,0,0)$ and for each $\eta'' \neq 0$, the matrix $H^P(x',x'',y',\eta'')$ has rank at least $r$.  This condition is very closely related to the condition of nonvanishing rotational curvature of Phong and Stein \cite{ps1986I}, \cite{ps1986II}; however, even when $r$ is maximal, the operators \eqref{theop} can and generally do have vanishing rotational curvature at the origin.  When $r < n'$ the rotational curvature may actually vanish at every point.

\begin{theorem}
Suppose that the operator \eqref{theop} satisfies the homogeneity conditions and that the mixed Hessian \eqref{hessian} has rank at least $r$ at every $(x,y',\eta'') \neq (0,0,0)$.  If the support of $\psi$ is sufficiently near the origin and $\frac{r}{n''} > \frac{|\alpha'|+|\beta'|}{|\beta''|}$ then $T$ maps $L^p(\R^n)$ to $L^q(\R^n)$ provided that the following inequalities are satisfied:
\begin{equation} 
\frac{|\beta'| + |\beta''|}{p} - \frac{|\alpha'| + |\beta''|}{q} < |\beta'|, \label{condition1}
\end{equation}
\begin{equation}
\left| \frac{1}{p} + \frac{1}{q} - 1 \right| < 1 - \frac{2 n'' + r}{r} \left( \frac{1}{p} - \frac{1}{q} \right). \label{condition2}
\end{equation}
Additionally, $T$ maps $L^p$ to $L^q$ if either one of the inequalities \eqref{condition1} or \eqref{condition2} are replaced with equality.  If both inequalities are replaced with equality, then $T$ is of restricted weak-type $(p,q)$.  The Riesz diagram corresponding to these estimates is shown in figure \ref{riesz}. \label{lplqthm}
\end{theorem}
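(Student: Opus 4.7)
The strategy is a decompose--rescale--sum argument: decompose $T$ dyadically using the quasihomogeneity, rescale each piece to a fixed model operator built from $S^P$, establish $L^p$-$L^q$ bounds for the model via an $L^2$ estimate exploiting the rank hypothesis, and then sum the pieces.

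Write $\psi=\sum_{j\geq 0}\psi_j$ with $\psi_j$ supported on a quasihomogeneous annulus of scale $2^{-j}$ adapted to the dilations $(\alpha',\beta'',\beta')$, so that $T=\sum_j T_j$.  Under the change of variables $(x',x'',y')=(2^{-j\alpha'}v,\,2^{-j\beta''}w,\,2^{-j\beta'}u)$, the hypothesis $\beta''_i>\alpha''_i$ sends the middle argument of $S$ to zero as $j\to\infty$, so $T_j$ equals $2^{-j|\beta'|}$ times a perturbation $T^P_j$ of the model $T^P f(v,w):=\int f(u,\,w+S^P(v,0,u))\,\psi_1(v,w,u)\,du$, with $T^P_j\to T^P$ in operator norm.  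Tracking Jacobians yields
\begin{equation*}
\|T_j\|_{p\to q}\;=\;2^{j\Delta(p,q)}\|T^P_j\|_{p\to q},\qquad \Delta(p,q):=\frac{|\beta'|+|\beta''|}{p}-\frac{|\alpha'|+|\beta''|}{q}-|\beta'|,
\end{equation*}
and \eqref{condition1} is precisely the condition $\Delta(p,q)<0$, giving geometric summability once $\|T^P_j\|_{p\to q}$ is controlled uniformly in $j$.

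For the model bound, form $(T^P)^*T^P$ and Fourier transform in the $w$ variable; the resulting kernel is an oscillatory integral whose phase $\eta''\cdot(S^P(v,0,u)-S^P(v,0,u'))$ has mixed Hessian in $(v,u)$ equal to $H^P(v,0,u,\eta'')$.  Because $\psi_1$ is supported on a unit-scale annulus bounded away from the origin, the rank-$r$ hypothesis applies uniformly there; non-degenerate stationary phase in an $r$-dimensional nondegenerate block of $v$, combined with trivial $L^1$ control in the remaining $n'-r$ directions, yields $|\eta''|^{-r/2}$ decay and hence an $L^2\to L^2$ Sobolev smoothing of order $r/2$ in the $w$-direction for $T^P$.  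Sobolev embedding in the $n''$-dimensional $w$-variable upgrades this to an $L^2\to L^{q_*}$-type bound (with $q_*$ determined by $r$ and $n''$); interpolating with the trivial $L^1\to L^1$ and $L^\infty\to L^\infty$ bounds and using duality traces out the polygon of strong-type $(p,q)$ described by \eqref{condition2}.  Uniformity in $j$ follows since the perturbation $T^P_j-T^P$ is smoothly small on the compact support of $\psi_1$.

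Combining the two steps, for $(p,q)$ in the interior of the intersection of \eqref{condition1} and \eqref{condition2} the series $\sum_j\|T_j\|_{p\to q}$ converges geometrically, giving $T\colon L^p\to L^q$.  The scaling hypothesis $r/n''>(|\alpha'|+|\beta'|)/|\beta''|$ is exactly the requirement that this intersection be nonempty: at the corner of \eqref{condition2}, $\Delta$ has the opposite sign, so \eqref{condition1} becomes the binding constraint over a genuine range of $(p,q)$.  The single-equality strong-type cases follow from Lorentz-space refinements of the dyadic sum, and the restricted weak-type endpoint at the double-equality corner follows by Bourgain-type interpolation between adjacent strong-type edges.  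The main technical obstacle will be to obtain the sharp $r/2$ Sobolev smoothing from only a rank-$r$ hypothesis without $\epsilon$-losses (likely requiring a further microlocal decomposition in $\eta''$ to handle anisotropic degeneracy of $H^P$) and to make this estimate uniform in $j$ despite only $C^\infty_{\mathrm{loc}}$ convergence of $T^P_j$ to $T^P$.
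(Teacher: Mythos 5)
Your overall architecture---dyadic decomposition $T=\sum_j T_j$ adapted to the quasihomogeneity, rescaling each piece to a fixed model $T^P$ with the scaling exponent $\Delta(p,q)$ extracting condition \eqref{condition1}, and summing geometrically---is sound as far as it goes, and the Jacobian bookkeeping is correct. But there is a genuine gap in the way you propose to obtain the model polygon \eqref{condition2}, and this gap is precisely where the paper's second, frequency-side, decomposition enters.

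First, the Sobolev-embedding step does not produce the bound you claim. The $|\eta''|^{-r/2}$ decay from the $TT^*$ argument yields smoothing of order $r/2$ only in the $n''$ transverse variables, so what you actually obtain is a mixed-norm estimate $L^2(\R^n)\to L^2_{x'}L^{q_*}_{x''}$ with $\tfrac{1}{q_*}=\tfrac12-\tfrac{r}{2n''}$ (and only when $r<n''$). There is no embedding $L^2_{x'}L^{q_*}_{x''}\hookrightarrow L^{q_*}(\R^n)$: on a bounded $x'$-support, H\"older runs the wrong way when $q_*>2$. Second, even pretending you had a genuine $L^2\to L^{q_*}$ bound, the resulting vertex $(\tfrac12,\tfrac12-\tfrac{r}{2n''})$ does not match condition \eqref{condition2}. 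The nontrivial vertex of \eqref{condition2} on the line of duality is at $\bigl(\tfrac{n''+r}{2n''+r},\tfrac{n''}{2n''+r}\bigr)$; your Sobolev vertex lies strictly below the duality line and would trace a larger polygon, which cannot be correct because \eqref{condition2} is sharp. The denominator $2n''+r$ cannot arise from Sobolev embedding; it reflects a balance between two competing costs that a pure Sobolev argument never sees.

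What the model bound actually requires is a Littlewood--Paley decomposition of $T^P$ in the $\eta''$ frequencies, $T^P = T^P Q_0 + \sum_{k\geq 0} T^P P_k$ with $|\eta''|\approx 2^k$ on the support of $P_k$. One then proves three estimates: the size bound $\|T^P P_k\|_{L^1\to L^\infty}\lesssim 2^{kn''}$ (the cost of frequency localization, and the source of the $2n''$), the trivial $L^1\to L^1$ and $L^\infty\to L^\infty$ bounds of size $O(1)$, and the oscillatory-integral bound $\|T^P P_k\|_{L^2\to L^2}\lesssim 2^{-kr/2}$ (your stationary-phase step). Feeding these into a restricted-weak-type interpolation and optimizing the minimum over $k$ produces exactly the vertex $\bigl(\tfrac{n''+r}{2n''+r},\tfrac{n''}{2n''+r}\bigr)$ and the wedge \eqref{condition2}. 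This is, in all essentials, the argument of the paper, with the difference that the paper carries out both the $j$- and $k$-decompositions directly on $T$ (using $T_jQ_j$ and $T_jP_{jk}$) and optimizes the resulting double sum in $(j,k)$ at once, rather than first proving a uniform model bound and then summing in $j$. Your reorganization is legitimate in spirit, but replacing the $k$-decomposition with a Sobolev embedding loses the $n''$ correctly. Finally, your endpoint argument is inverted: the paper obtains restricted weak type at the two vertices first (directly from the optimized double sum on characteristic functions), and then interpolates outward to get strong type on the open edges; strong type on approaching edges does not by itself deliver restricted weak type at the corner without a priori control of the constants, which is the endpoint estimate you were trying to prove.
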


\begin{figure}
\centering
\begin{pspicture}(-0.5,-0.5)(6.5,6)
\pspolygon[linewidth=1pt](0,6)(0,0)(6,0)(6,6)
\pspolygon*[linewidth=0.5pt,linecolor=lightgray](0,0)(3.8888,0.7777)(5.6666,4.3333)(6,6)(0,6)
\pspolygon[linewidth=0.5pt](0,0)(3.8888,0.7777)(5.6666,4.3333)(6,6)(0,6)
\psline[linewidth=0.5pt,linestyle=dashed](3.5,0)(3.8888,0.7777)(5,1)(5.6666,4.3333)(6,5)
\put(-0.4,2.9){$\displaystyle \frac{1}{q}$}
\put(2.8,-0.4){$\displaystyle 1/p$}
\pscircle(3.8888,0.7777){0.1}
\pscircle(5.6666,4.3333){0.1}
\end{pspicture}
\caption{Riesz diagram corresponding to the estimates proved for $T$ (the shaded area) in the case when $\frac{r}{n''} > \frac{|\alpha'| + |\beta'|}{|\beta''|}$.  Restricted weak-type inequalities are obtained at the nontrivial vertices (marked by circles).}
\label{riesz}
\end{figure}
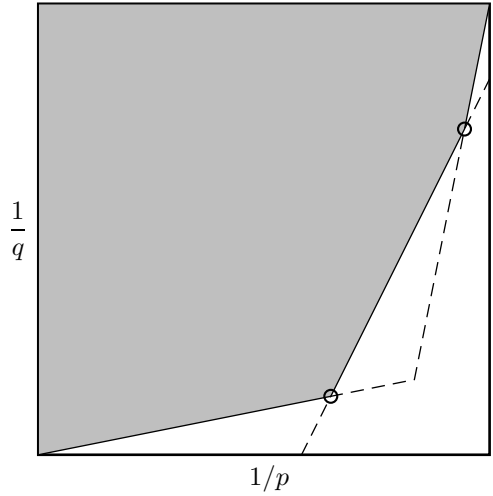

In the event that $\frac{r}{n''} < \frac{|\alpha'| + |\beta'|}{|\beta''|}$, then the condition \eqref{condition2} excludes the possibility of equality in condition \eqref{condition1}.  Both \eqref{condition1} and \eqref{condition2} are ``optimal,'' but in varying senses.  In the case of the former, any operator satisfying the homogeneity condition is unbounded from $L^p$ to $L^q$ if $\frac{|\beta'| + |\beta''|}{p} - \frac{|\alpha'| + |\beta''|}{q} > |\beta'|$.  For the latter, there exists an operator satisfying the rank condition for which \eqref{condition2} cannot hold when $(p,q)$ satisfy the reverse inequality.

\begin{theorem}
Suppose that $T$ satisfies the rank and homogeneity conditions and $\frac{r}{n''} > \frac{|\alpha'| + |\beta'|}{|\beta''|}$ (and the support of $\psi$ is sufficiently near the origin).  Then 
the operator $T$ maps the space $L^p(\R^n)$ to the Sobolev space $L^p_s(\R^n)$ ($s \geq 0$) provided that the following two conditions are satisfied: \label{sobolevthm}
\begin{equation}
 s \max\{ \beta''_1,\ldots,\beta''_{n''}\} \leq \frac{|\alpha'|}{p} + |\beta'| \left(1- \frac{1}{p}\right), \label{condition3}
\end{equation}
\begin{equation}
\frac{s}{r} < \frac{1}{2} - \left|\frac{1}{2} - \frac{1}{p} \right|. \label{condition4}
\end{equation}
\end{theorem}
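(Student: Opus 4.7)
\emph{Proof proposal.} The plan is to combine a dyadic decomposition adapted to the homogeneity condition (\ref{homo1}) with an oscillatory integral estimate derived from the rank hypothesis on (\ref{hessian}). Introduce a smooth partition of unity $1 = \sum_{j \geq 0} \eta_j(x',x'',y')$ adapted to the anisotropic dilations $(x',x'',y') \mapsto (2^{-j\alpha'}x', 2^{-j\beta''}x'', 2^{-j\beta'}y')$, with $\eta_j$ ($j \geq 1$) supported in the corresponding dyadic annulus and $\eta_0$ supported on scale $1$. Write $T = \sum_j T_j$, where $T_j$ is obtained by replacing $\psi$ with $\psi\,\eta_j$. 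Since the rank condition can fail only at the origin, each $T_j$ with $j \geq 1$ excludes the bad point and the rank-$r$ assumption applies uniformly; (\ref{homo1}) then allows one to conjugate $T_j$ by the dilations above, rewriting it (up to an error vanishing as $j\to\infty$) as an explicit power of $2^j$ times a single ``principal'' operator $T^P$ whose phase involves the homogeneous limit $S^P$ in place of $S$.

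To bound $T^P: L^p \to L^p_s$, take the partial Fourier transform in $x''$ and express
\begin{equation*}
T^P f(x',x'') = \int e^{i\eta''\cdot(x''+S^P(x',x'',y'))}\, \tilde\psi(x',x'',y')\, \hat f(y',\eta'')\, dy'\, d\eta''.
\end{equation*}
Localize $\eta''$ to dyadic shells $|\eta''|\sim 2^k$. The resulting piece is an oscillatory integral operator in $y'$ with phase $2^k (\eta''/|\eta''|)\cdot S^P$ whose mixed Hessian (\ref{hessian}) has rank at least $r$; H\"ormander's theorem yields an $L^2\to L^2$ bound of order $2^{-kr/2}$. Interpolating with the trivial $L^1\to L^1$ and $L^\infty\to L^\infty$ bounds of order $1$ gives $L^p \to L^p$ norm $\lesssim 2^{-kr(1/2 - |1/2-1/p|)}$. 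The Sobolev factor contributes $2^{ks}$ on the shell, and the geometric sum over $k$ converges exactly when (\ref{condition4}) holds.

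To sum in $j$, one must track how the isotropic Sobolev norm $\|\cdot\|_{L^p_s}$ scales under the anisotropic dilation in $x''$. Decomposing the Sobolev derivative along directional frequency shells, the dominant contribution comes from the direction with the largest exponent, producing a factor of $2^{js\max_i \beta''_i}$. Balancing this against the Jacobian-type weight $2^{j(|\alpha'|/p + |\beta'|(1-1/p))}$ from the $x'$ and $y'$ dilations and the overall rescaling of $T_j$ produces precisely condition (\ref{condition3}). The standing assumption $r/n'' > (|\alpha'|+|\beta'|)/|\beta''|$ (already in force from Theorem \ref{lplqthm}) guarantees that (\ref{condition3}) and (\ref{condition4}) admit a nontrivial simultaneous solution.

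The principal obstacle is the scaling bookkeeping in the last step: because the Sobolev norm is isotropic but the dilation in $x''$ is not, one is forced into a frequency-by-frequency decomposition of the Sobolev multiplier before undoing the dilations, and it is in this step that the $\max_i \beta''_i$ of (\ref{condition3}) appears. A secondary technical point is the uniform-in-$j$ control of the perturbative error between $S$ and its principal part $S^P$ after rescaling; this should be routine given the near-origin support of $\psi$, but demands verification that the error terms remain small in the operator norms relevant to the oscillatory integral estimate and the subsequent interpolation.
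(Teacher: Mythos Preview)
Your outline captures the right large-scale structure --- a $j$-decomposition adapted to the homogeneity, a $k$-decomposition in the dual $x''$-frequency, a rank-$r$ oscillatory estimate yielding $2^{-kr/2}$ on $L^2$, and interpolation --- and this is essentially the paper's architecture as well. The genuine gap is in the $j$-summation. Condition \eqref{condition3} is stated with $\leq$, so the endpoint (equality) is part of the theorem; at that endpoint your balance gives each $T_j$ an $L^p\to L^p_s$ norm of order~$1$, and the triangle-inequality sum $\sum_j \|T_j\|$ diverges. The paper explicitly identifies this as the main obstacle: one cannot close the argument with $L^1\!\to\! L^1$ or $L^\infty\!\to\! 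L^\infty$ endpoints for the individual pieces. Instead the paper (i) sums in $j$ on the $L^2$ side \emph{before} interpolating, via Cotlar--Stein almost-orthogonality (the projections $P_{jk}$, $Q_j$ make the pieces nearly orthogonal for fixed $k$), and (ii) replaces the $L^\infty$ endpoint by a mixed $L^\infty_{x''}\,\mathrm{BMO}^{\alpha'}_{x'}$ estimate for $\sum_j T_j P_{jk}$, which does sum at the critical exponent. Then analytic (not real) interpolation is applied to the family $J_{\gamma_L}^{s_L}(\sum_j T_j P_{jk})J_{\gamma_R}^{s_R}$, with the fractional derivatives built into the analytic family; only afterwards is the $k$-sum taken, under \eqref{condition4}. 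Your scheme of interpolating piece-by-piece in $k$ for a single $T_j$ and then summing in $j$ cannot reach the closed range in \eqref{condition3}.

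Two smaller points. First, your spatial decomposition scales $x''$ by $2^{-j\beta''}$, but the homogeneity \eqref{homo1} uses $2^{-j\alpha''}$; the paper localizes $T_j$ with the $\alpha$-scaling in physical space and introduces $\beta''$ only through the frequency projections $Q_j, P_{jk}$ --- this mismatch of two scalings is exactly why the paper develops the nonisotropic Bessel potentials $J_\gamma^s$ and their restricted versions $J_\gamma^s|_\delta$. Second, your oscillatory integral representation suppresses the $x''$-dependence of $S^P$ in the phase, which affects the $TT^*$ computation; the paper's van der Corput argument handles this carefully via the block matrix \eqref{matrixeqn}, using $\beta''_i>\alpha''_i$ to show the $x''$-derivative block tends to zero. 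Finally, the paper obtains the range $p<2$ by duality (showing $T^*$ satisfies the same hypotheses with $\alpha'\leftrightarrow\beta'$), rather than directly from the symmetric form of \eqref{condition4}.
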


Just like the constraint \eqref{condition1}, the inequality \eqref{condition3} is necessarily satisfied by any operator satisfying the hypotheses of theorem \ref{sobolevthm}.  Another interesting feature of theorems \ref{lplqthm} and \ref{sobolevthm} is that the homogeneity condition and the rank condition on the Hessian are decoupled, in the sense that each of the constraints \eqref{condition1}--\eqref{condition4} depends (quantitatively, at least) on only one of the two assumptions made of $T$.  A consequence of this is that when $|\beta''|$ is large, the $L^p$-$L^q$ boundedness of $T$ near the line of duality $\frac{1}{p} + \frac{1}{q} = 1$ as well as the $L^p$-$L^p_s$ boundedness for $p$ near $2$ are almost completely insensitive to the condition on the rank of the Hessian.  This pheonomenon was observed by Greenleaf, Pramanik, and Tang \cite{gpt2007} in the context of oscillatory integral operators (this is the ``low-hanging fruit'').

It is of particular interest, then, to make a statement quantifying the strength of the rank assumption on the mixed Hessian \eqref{hessian}.  For some particular combinations of $\alpha',\alpha'',\beta'$, and $\beta''$, there may not, in fact, be any operators satisfying the homogeneity condition because $S$ is assumed to be smooth.  For this reason, it will not be possible to make a meaningful statment valid for every possible combination of multiindices.  To rectify, the multiindices $\alpha',\beta'$ and $\alpha''$ will be considered fixed, and a ``positive fraction'' of the choices of $\beta''$ will be examined.  There are a variety of ways to formulate this concept;  here a set of multiindices $E$ of length $n''$ will be said to have lower density $\epsilon$ provided that
\[ \liminf_{N \rightarrow \infty} \frac{\# \set{\beta'' \in E}{\beta''_i \leq N \ \forall i=1,\ldots,n''}}{N^{n''}} \geq \epsilon. \]
Let $\Lambda_{\alpha,\beta}$ be the space of all $n''$-tuples of real polynomials $(p_1,\ldots,p_{n''})$ in the variables $x',y',$ and $x''$ (for $x',y' \in \R^{n'}$ and $x'' \in R^{n''}$) such that
\[ p_l(2^{j \alpha'} x', 2^{j \alpha''} x'', 2^{j \beta'} y') = 2^{\beta''_l} p_l(x',x'',y') \]
for each integer $j$ and $l=1,\ldots,n''$; suppose further that $\Lambda^{\alpha,\beta}$ is given the topology of a real, finite-dimensional vector space.  Each element $(p_1,\ldots,p_{n'})$ naturally induces an operator of the form \eqref{theop} which satisfies the homogeneity condition.  The strength of the condition \eqref{hessian} can now be quantified as follows:
\begin{theorem}
Fix $\alpha',\alpha''$, and $\beta'$.
Let $K_1$ be the least common multiple of the entries of $\alpha',\alpha''$ and $\beta'$; let $K_2$ be the number of distinct values (modulo $K_1$) taken by the sum $\alpha_i' + \beta_j'$ for $i,j=1,\ldots,n'$.  Then for any $\beta''$ in some set of lower density $K_1^{-n''}$, the operators \eqref{theop} corresponding to the polynomials $\Lambda_{\alpha,\beta}$ generically satisfy the rank condition provided $$r < n' - \sqrt{(1-K_2^{-1})(n')^2 + 2n}.$$
\end{theorem}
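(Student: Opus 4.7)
The plan is a genericity argument via incidence-variety dimension counting, in the spirit of Greenleaf, Pramanik, and Tang. The set $B \subset \Lambda_{\alpha,\beta}$ of polynomial tuples for which the rank condition fails---i.e., those $p$ admitting some $(x,y',\eta'') \neq 0$ at which every $r \times r$ minor of $H^P$ vanishes---is Zariski-closed in the finite-dimensional real vector space $\Lambda_{\alpha,\beta}$, so ``generically'' the rank condition holds as soon as $B$ is a proper subvariety. Using the quasihomogeneous scaling of $H^P$ in $(x,y')$ together with its homogeneity in $\eta''$, I would first reduce to $(x,y',\eta'')$ lying in a compact ``quasi-sphere'' $\Sigma$ of dimension $2n-2$ and form the incidence variety
\[ Z := \set{(p,x,y',\eta'') \in \Lambda_{\alpha,\beta} \times \Sigma}{\operatorname{rank} H^P(x,y',\eta'') \leq r-1}. \]
Since $B$ is the image of $Z$ under projection to the first factor, it suffices to prove $\dim Z < \dim \Lambda_{\alpha,\beta}$, which I would do by fibering $Z$ over $\Sigma$ and bounding fiber dimensions.

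The essential computation concerns the fiber of $\pi_\Sigma : Z \to \Sigma$ over a point $(x,y',\eta'') \in \Sigma$. For such a fixed point, the map $p \mapsto H^P(x,y',\eta'')$ is linear with image $V \subset \R^{n' \times n'}$, and the fiber is $\{p : H^P \in R\}$, where $R$ is the rank-$\leq (r-1)$ variety. The entry $H^P_{ij} = \sum_l \eta''_l\, \partial^2 p_l / \partial x'_i \partial y'_j$ receives contributions only from monomials of $p_l$ of $(\alpha',\alpha'',\beta')$-weight $\beta''_l - \alpha'_i - \beta'_j$, so the structure of $V$ is governed by the residues modulo $K_1$. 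The hypothesis that $\beta''$ lies in a lower-density-$K_1^{-n''}$ set should be read as requiring each $\beta''_l$ to fall in a prescribed residue class modulo $K_1$, the factor $K_1^{-n''}$ being precisely the density of such $\beta''$; under this arrangement, $V$ fills the full weight-admissible subspace, which decomposes into $K_2$ independent blocks corresponding to the distinct residues of $\{\alpha'_i + \beta'_j\}$.

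Bounding the codimension of $V \cap R$ inside $\Lambda_{\alpha,\beta}$ is then a block-decomposed version of the classical formula $\operatorname{codim} R = (n'-r+1)^2$. The codimension of $V$ inside the ambient matrix space may drop by up to $(1-K_2^{-1})(n')^2$ (measuring the fraction of entries forced to be coupled rather than independent), so that the codimension of $V \cap R$ in $\Lambda_{\alpha,\beta}$ is at least $(n'-r+1)^2 - (1-K_2^{-1})(n')^2$. Combined with $\dim \Sigma = 2n-2$, this yields
\[ \dim Z \leq \dim \Lambda_{\alpha,\beta} + (2n-2) - \bigl((n'-r+1)^2 - (1-K_2^{-1})(n')^2\bigr), \]
and the desired inequality $\dim Z < \dim \Lambda_{\alpha,\beta}$ is equivalent, after absorbing the lower-order $+1$ into the radicand, to $r < n' - \sqrt{(1-K_2^{-1})(n')^2 + 2n}$.

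The main obstacle is the block-structure codimension estimate: the classical $(n'-r+1)^2$ bound on the codimension of rank-deficient matrices need not hold when restricted to a structured subspace $V$, and one must exploit that each of the $K_2$ residue blocks supports generic matrix entries internally. A careful case analysis on the blocks, together with the verification that the weight-admissible subspace is realized as the image of $p \mapsto H^P$ uniformly over \emph{all} points of $\Sigma$ (not merely at generic points of $\Sigma$), is where the density-$K_1^{-n''}$ hypothesis on $\beta''$ does its essential work and will constitute the main technical content of the proof.
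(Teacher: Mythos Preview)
Your outline is correct and is essentially the paper's own argument: form the incidence variety over $\Lambda_{\alpha,\beta}\times\{(x,y',\eta'')\}$, bound the fiber dimension over each base point via the linear map $p\mapsto H^P$, and use the residue structure modulo $K_1$ together with pigeonhole to control $\dim V$. The paper works over the full $\R^{2n}$ rather than a compact quasi-sphere and computes with the rank-$r$ determinantal variety (codimension $(n'-r)^2$) rather than rank $\leq r-1$, which accounts for the lower-order discrepancy you absorb at the end.

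One clarification on your ``main obstacle'' paragraph: the codimension estimate you flag as difficult is in fact elementary once $\dim V$ is known. Since $V\cap R\subset R$ one has $\dim(V\cap R)\leq\dim R$, hence $\operatorname{codim}_V(V\cap R)\geq\dim V-\dim R$, which is exactly your stated bound; no block-by-block analysis of the determinantal variety inside $V$ is needed. The genuine work---which you correctly list but underemphasize---is the \emph{uniform} lower bound $\dim V\geq K_2^{-1}(n')^2$ at every nonzero $(x,y',\eta'')$. The paper does this concretely: after rescaling so that each coordinate of $(x,y',\eta'')$ is $0$ or $1$, fix $k_0$ with $\eta''_{k_0}\neq 0$; for each pair $(i,j)$ with $K_1\mid\beta''_{k_0}-\alpha'_i-\beta'_j$ there is a monomial in $\mathcal P^{\beta''_{k_0}}$ of the form $x'_l{}^p x'_i y'_j$, $x''_l{}^p x'_i y'_j$, or $y'_l{}^p x'_i y'_j$ (choosing the factor according to which coordinate is nonzero) whose variation moves only the $(i,j)$-entry of $H^P$. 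The density-$K_1^{-n''}$ hypothesis is precisely the requirement that each $\beta''_l$ lie in the residue class mod $K_1$ for which at least $K_2^{-1}(n')^2$ such pairs $(i,j)$ exist.
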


In the context of averages over hypersurfaces with isotropic homogeneity (taking the entries of $\alpha'$, $\alpha''$, and $\beta'$ to equal one, corresponding to the case $n_X = n_Z$ in the work of Greenleaf, Pramanik, and Tang \cite{gpt2007}), a generic mixed Hessian \eqref{hessian} has everywhere (exept the origin) rank at least $n' - 1 - \sqrt{2n'+2}$, and the hypotheses of the theorems \ref{lplqthm} and \ref{sobolevthm} are satisfied for {\it any} choice of $\beta'' \geq 3$ when $n' > 25$.  On the opposite extreme, a rank one condition holds provided that $n'' < \frac{n'(n'-4)}{2}$ (an extremely large codimension, similar to those encountered by Cuccagna \cite{cuccagna1996} and well beyond the range of nonvanishing rotational curvature) and theorems \ref{lplqthm} and \ref{sobolevthm} hold with $r=1$ for all multiindices $\beta''$ satisfying $|\beta''| > (n')^2 (n'-4)$.


\section{Preliminaries}
To begin, a few comments about homogeneity are necessary.  Given multiindices $\alpha', \alpha''$, and $\beta'$, a smooth function $f$ defined on a neighborhood of the origin in $\R^{2n' + n''}$ (as a function of  $x',x''$, and $y'$) will be called nearly homogeneous of degree $l \geq 0$ if 
\begin{equation}
\lim_{j \rightarrow \infty} 2^{lj} f(2^{-j \alpha} x, 2^{-j \beta'} y') =: f^{P}(x',x'',y') \label{principal}
\end{equation}
 exists as $j \rightarrow \infty$ for every $x', x''$, and $y'$ and is nonzero at some point (the limit function $f^{P}$ will be called the principal part of $f$).  In fact, given any smooth function $f$ not vanishing to infinite order at the origin, there is a unique nonnegative integer $l$ such that $f$ is nearly homogeneous of degree $l$, the limit \eqref{principal} must be uniform on compact sets, and the prinicpal part must be a polynomial.  Furthermore, for any multiindex $\gamma$ of length $n$ and any multiindex $\delta$ of length $n'$,
\begin{equation} 
\lim_{j \rightarrow \infty} 2^{j l - j \alpha \cdot \gamma - j \beta' \cdot \delta } \frac{\partial^{|\gamma|+|\delta|} f}{(\partial x)^\gamma(\partial y')^\delta} (2^{-j \alpha}x,2^{-j \beta'} y') = \frac{\partial^{|\gamma| + |\delta|} f^{P}}{(\partial x)^\gamma (\partial y')^\delta}(x,y') \label{limits}
\end{equation}
(where $\alpha \cdot \gamma = \sum_{i=1}^n \alpha_i \gamma_i$ and likewise for $\delta \cdot \beta'$) with uniform convergence on compact sets.  These assertions all follow directly from Taylor's theorem with remainder by regrouping terms according to homogeneity degree (with respect to $\alpha', \alpha''$, and $\beta'$); the proofs are very classical and will not be given here.

\subsection{Remarks on the dual operator $T^*$}
The next item to be explored is the nature of the operator $T^*$ which is dual to \eqref{theop}.  For fixed $x'$ and $y'$, let $\Phi_{x',y'}(x'') := x'' + S(x',x'',y')$.  To express the operator $T^*$ as an integral operator, it is necessary to invert the mapping $\Phi_{x',y'}$.  To that end, consider the derivative of $S_l$ with respect to $x_k''$.  The function $S_l$ is nearly homogeneous of degree $\beta''_l$ by assumption; therefore \eqref{limits} guarantees that the derivative $\partial_{x''_k} S_l(x,y')$ vanishes at the origin whenever $\beta''_l > \alpha''_k$.
  Since $\beta_i'' > \alpha_i''$ for each $i$, it follows that the Jacobian matrix of $\Phi_{0,0}(x'')$ at $x''=0$ is upper triangluar with ones along the diagonal (after a suitable permutation of the rows and columns).  As a result, the inverse function theorem guarantees the existence of a smooth inverse to $\Phi_{x',y'}$ near $x''=0$ for all sufficiently small $x'$ and $y'$.  It follows that the operator dual operator $T^*$ may be written as
\[ T^* g(y',y'') = \int g(x',y''-S(x',\Phi_{x',y'}^{-1}(y''),y')) \tilde \psi(x',y) dx' \]
for a new cutoff $\tilde \psi$ equal to the old cutoff $\psi$ divided by the absolute value of the Jacobian determinant of $\Phi_{x',y'}$.

The next step is to compute the principal part of the dual mapping $S^*$ defined by $S^*(y',y'',x') := -S(x',\Phi_{x',y'}^{-1}(y''),y')$.
To do so, consider yet another important consequence of the assumption $\beta''_i > \alpha''_i$: \begin{equation}\lim_{j \rightarrow \infty} 2^{j \alpha''} \Phi_{2^{-j \alpha'} x', 2^{-j \beta'} y'}(2^{-j \alpha''} x'') = x''\label{limit2} \end{equation} with uniform convergence on compact sets.  Furthermore, for any $R > 0$, the inverse function theorem provides a uniform constant $C_R$ such that 
\begin{equation} |x''| \leq C_R | 2^{j \alpha''} \Phi_{2^{-j \alpha'} x', 2^{-j \beta'} y'}(2^{-j \alpha''} x'')| \label{limit3}
\end{equation}
uniform in $j$, valid for all $x''$ such that the right-hand side is itself bounded by $R C_R$.  It therefore must be the case that
$$\lim_{j \rightarrow \infty} 2^{j \alpha''} \Phi_{2^{-j \alpha'} x',2^{-j \beta'} y'}^{-1} (2^{-j \alpha''} y'') = y''$$ (since \eqref{limit3} guarantees that the sequence on the left-hand side is bounded, and the uniformity of \eqref{limit2} shows that any convergent subsequence must have limit $y''$).  Consequently, if $S$ satisfies the homogeneity condition and has principal part $S^{P}(x',x'',y')$, then $S^*(x',y'',y')$ satsifies the homogeneity condition with $x'$ scaled by $\alpha'$, $y'$ by $\beta'$, and $y''$ by $\alpha''$ with principal part $-S^{P}(x',y'',y')$.  Thus if \eqref{theop} has a mixed Hessian \eqref{hessian} with rank at least $r$ near the origin, then so does $T^*$.  This fact will be used later to simplify the proof of theorem \ref{sobolevthm}.

\subsection{Main tools}

Now comes the time to prove the main tools which power the arguments necessary for theorems \ref{lplqthm} and \ref{sobolevthm}.  To simplify matters, fix, once and for all, a smooth function $\varphi_0$ on the real line which is supported on $[-2,2]$, identically one on $[-1,1]$, and monotone on $[0,\infty)$ and $(-\infty,0]$.

The first order of business is the integration-by-parts lemma.  The key idea of the method of stationary phase is that the main contributions to an oscillatory integral occur where the gradient of the phase is ``small.''  While there is an intrinsic way of stating that the gradient of the phase vanishes, there is (unfortunately) no coordinate-independent way of quantifying ``smallness.'' The answer, then, is to be explicit about the coordinates being used, and to change those coordinates whenever it is necessary and proper to do so. 

This changing of coordinate systems is captured here by what will be called scales.  More precisely:  a scale $\scaled$ on $\R^d$ will be any multiindex of length $d$ with entries in $\Z$.  A vector $v \in \R^d$ will have length relative to $\scaled$ given by
\[ |v|_\scaled := \left( \sum_{i=1}^d 2^{2 \scaled_i} |v_i|^2 \right)^\frac{1}{2} \]
(the term ``scale'' was chosen because $\scaled$ implicitly induces a rescaling of the standard coordinate system via this formula).  Likewise, the derivative $\partial^\gamma_\scaled$ (for some standard multiindex $\gamma$) is meant to represent the derivative
\[ 2^{\sum_{i=1}^d \gamma_i \scaled_i} \left( \frac{\partial}{\partial t_1} \right)^{\gamma_1} \cdots \left( \frac{\partial}{\partial t_d} \right)^{\gamma_d} \]
(where the standard coordinates are here labelled $t_1,\ldots,t_d$).  In this notation, the integration-by-parts lemma is stated as follows:
\begin{lemma}
Let $\Phi$ be any real-valued, $C^\infty$ function defined on some open subset of $\R^{d}$, and let $\varphi$ be a $C^\infty$ function compactly supported in the domain of $\Phi$.  Then for any positive integer $N$, there exists a constant $C_N$ such that \label{ibplemma}
\begin{equation} \left| \int e^{i \Phi(t)} \varphi(t) dt \right| \leq C_N \! \int \frac{\sum_{|\gamma|=0}^N | \partial_\scaled^\gamma \varphi(t)|}{(1 +  \epsilon |\nabla \Phi(t)|_\scaled)^N} \left|   1 + \frac{\epsilon^2 \sum_{|\gamma|=2}^{N+1} | \partial_\scaled^\gamma \Phi(t)|}{1 + \epsilon |\nabla \Phi(t)|_\scaled} \right|^N \! dt, \label{ibp}
\end{equation}
where $\scaled$ is any scale and $0 < \epsilon \leq 1$.
\end{lemma}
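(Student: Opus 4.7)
The plan is to carry out a scale-adapted integration by parts. With $D(t) := 1 + \epsilon^2 |\nabla\Phi(t)|_\scaled^2$, introduce the first-order operator
\[ L f := \frac{1}{D}\left( f - i \epsilon^2 \sum_{j=1}^d 2^{2\scaled_j} (\partial_j \Phi)(\partial_j f)\right), \]
which is tailored so that $L(e^{i\Phi}) = e^{i\Phi}$, as follows immediately from $\partial_j(e^{i\Phi}) = i(\partial_j\Phi)e^{i\Phi}$. If $L^*$ denotes the formal adjoint of $L$ with respect to Lebesgue measure, then integrating by parts $N$ times (the boundary terms vanish by the compact support of $\varphi$) gives $\int e^{i\Phi}\varphi\,dt = \int e^{i\Phi} (L^*)^N \varphi \, dt$, which reduces the lemma to a pointwise bound on $(L^*)^N \varphi$. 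Two elementary inequalities, namely $(1+\epsilon|\nabla\Phi|_\scaled)^2 \leq 2D$ and $\epsilon|\nabla\Phi|_\scaled/D \leq 2(1+\epsilon|\nabla\Phi|_\scaled)^{-1}$, will be used throughout to convert the $1/D$ factors produced by differentiation into the powers of $(1+\epsilon|\nabla\Phi|_\scaled)^{-1}$ appearing in \eqref{ibp}.

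The base case $N=1$ is handled by direct computation. Expanding $L^* g = g/D + i\epsilon^2 \sum_j \partial_j(2^{2\scaled_j}(\partial_j\Phi)g/D)$ with the product rule produces three types of terms: the main term $g/D$, terms of the form $\epsilon^2 2^{2\scaled_j}(\partial_j \Phi)(\partial_j g)/D$, and zeroth-order terms carrying a factor $\epsilon^2 (\partial_\scaled^\gamma \Phi)$ with $|\gamma|=2$ (possibly accompanied by extra $\partial_k\Phi$'s from differentiating $1/D$). The two inequalities above absorb each first-order derivative factor $\partial_k \Phi$ into a single power of $(1+\epsilon|\nabla\Phi|_\scaled)^{-1}$; what remains is precisely the integrand on the right-hand side of \eqref{ibp} at $N=1$.

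For the inductive step, one applies $L^*$ to each term produced at stage $N$. Each application (i) introduces a fresh factor of $(1+\epsilon|\nabla\Phi|_\scaled)^{-1}$, matching the bump of the denominator exponent from $N$ to $N+1$; (ii) distributes, via the Leibniz rule, one extra $\partial_{\scaled,j}$ either onto $\varphi$ (raising the order cap in the numerator of \eqref{ibp} from $N$ to $N+1$) or onto an already-present scaled derivative of $\Phi$ (raising its order by one, and hence the upper limit on $|\gamma|$ inside the bracket from $N+1$ to $N+2$); and (iii) may introduce a new multiplicative factor $\epsilon^2 |\partial_\scaled^\gamma \Phi|/(1+\epsilon|\nabla\Phi|_\scaled)$, which is absorbed into the raise of the bracket exponent from $N$ to $N+1$. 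Summing over the finitely many cross terms produced in this way at stage $N+1$ yields a pointwise bound of exactly the form \eqref{ibp}, and integrating over $t$ completes the proof.

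The main obstacle is purely combinatorial bookkeeping: one must verify that each term generated by differentiating $1/D$ or an existing scaled derivative of $\Phi$ gets paired with the correct power of $(1+\epsilon|\nabla\Phi|_\scaled)^{-1}$, so that it either lands in the denominator $(1+\epsilon|\nabla\Phi|_\scaled)^{-N}$ or inside the bracket factor, with no wasted decay. The two elementary inequalities stated at the outset are precisely calibrated to carry out this absorption; once one organizes the expansion carefully (say, by indexing terms according to the multiset of orders of scaled derivatives of $\Phi$ and the multiplicity of $D$-factors in the denominator), the induction goes through without incident, and the resulting constant $C_N$ depends only on $N$ and the ambient dimension $d$.
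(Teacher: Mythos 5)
Your proposal is correct, and it takes a genuinely different route from the paper. The paper does not regularize the integration-by-parts operator directly; instead it embeds the integral into $\R^{d+1}$ by writing $\int e^{i\Phi}\varphi\,dt$ as a constant multiple of $I_\alpha = \int e^{i(-2\pi\alpha t_0 + \Phi(t))}\varphi_0(t_0)\varphi(t)\,dt_0\,dt$, so that the extended phase $\tilde\Phi$ has a gradient bounded away from zero by construction; it then applies the \emph{unregularized} operator $L_{\tilde\scaled}f = \nabla_{\tilde\scaled}\tilde\Phi\cdot\nabla_{\tilde\scaled}f / (i|\nabla\tilde\Phi|_{\tilde\scaled}^2)$ with a shifted scale $\tilde\scaled = (l,\scaled_1-k,\ldots,\scaled_d-k)$ where $2^{-k}\sim\epsilon$, and the $\epsilon$ powers in \eqref{ibp} emerge from the shift rather than from the operator itself. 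You instead fold the regularization into the operator via $D = 1 + \epsilon^2|\nabla\Phi|_\scaled^2$, which avoids the extra variable and the auxiliary constant $C_\alpha$ at the cost of slightly messier Leibniz terms (you must track the differentiated $1/D$ factors explicitly, whereas in the paper's version $|\nabla\tilde\Phi|_{\tilde\scaled}$ is itself a smooth function whose derivatives are controlled by higher derivatives of $\tilde\Phi$ in the usual way). Your identity $L(e^{i\Phi}) = e^{i\Phi}$ and the two absorption inequalities $(1+\epsilon|\nabla\Phi|_\scaled)^2\le 2D$ and $\epsilon|\nabla\Phi|_\scaled/D\le 2(1+\epsilon|\nabla\Phi|_\scaled)^{-1}$ check out, and the bookkeeping you sketch --- one fresh factor of $(1+\epsilon|\nabla\Phi|_\scaled)^{-1}$ per application of $L^*$, at most one new bracket factor $\epsilon^2|\partial_\scaled^\gamma\Phi|/(1+\epsilon|\nabla\Phi|_\scaled)$ per application (from the Hessian of $\Phi$ or from $\partial_j(1/D)$), and at most one increment either to the order of $\partial_\scaled^\delta\varphi$ or to the order of an existing $\partial_\scaled^\gamma\Phi$ --- lands exactly on the caps $|\gamma|\le N$ for $\varphi$, $2\le|\gamma|\le N+1$ for $\Phi$, and the $N$-th power of the bracket. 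Both arguments require essentially the same amount of Leibniz bookkeeping; the paper's embedding trick buys a cleaner differential operator, while your approach is more self-contained and does not need the auxiliary integral.
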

\begin{proof}
Consider the following integral on $\R^{d+1}$:
\[ I_\alpha := \int e^{i (- 2 \pi \alpha t_0 + \Phi(t))} \varphi_0(t_0) \varphi(t) dt_0 dt. \]
There is at least one value of $\alpha \in (0,1)$ depending only on $\varphi_0$, there exists a constant $C_\alpha \neq 0$ such that $C_\alpha^{-1} I_\alpha$ is precisely the value of the integral to be computed.  Let such an $\alpha$ be fixed once and for all.  Let $\tilde t := (t_0,t_1,\ldots, t_d)$, and likewise let $\tilde \Phi(\tilde t)$ and $\tilde \varphi(\tilde t)$ represent the phase and amplitude, respectively, appearing in the integral defining $I_\alpha$.

Let $k$ be any nonnegative integer, let $l$ be an integer such that $2^{-l} < 2 \pi \alpha \leq 2^{-l+1}$,  and let $\tilde \scaled$ be the scale on $\R^{d+1}$ given by $(l,\scaled_1-k,\ldots,\scaled_n-k)$.  Now consider the following differential operator on $\R^{d+1}$:
\[ L_{\tilde \scaled} f(\tilde t) := \frac{ \nabla_{\tilde \scaled} \tilde \Phi(\tilde t) \cdot \nabla_{\tilde \scaled} f(\tilde t)}{i |\nabla \tilde \Phi( \tilde t)|_{\tilde \scaled}^2}. \]
Since $\alpha \neq 0$, the operator $L_{\tilde \scaled}$ is well-defined because the denominator is nonzero.  The standard integration-by-parts argument dictates that
\begin{equation*}
I_\alpha =  \int \left((L_{\tilde \scaled})^N e^{i \tilde \Phi(\tilde t)} \right) \tilde \varphi(\tilde t) d \tilde t = \int e^{i \tilde \Phi(\tilde t)} \left( (L_{\tilde \scaled}^t)^N \tilde \varphi(\tilde t) \right) d \tilde t 
\end{equation*}
for each integer $N \geq 0$, where $L_{\tilde \scaled}^t$ is the adjoint of $L_{\tilde \scaled}$.  Now an elementary induction on the Leibnitz rule gives that, for each $N$, there is a constant $C_{N}$ depending on $N$ (and the dimension $d$) such that
\begin{equation}|(L_{\tilde \scaled}^t)^N \tilde \varphi(\tilde t)| \leq \frac{C_{N}}{|\nabla \tilde \Phi(\tilde t)|_{\tilde \scaled}^N} \left( \frac{\sum_{|\gamma|=1}^{N+1} |\partial_{\tilde \scaled}^\gamma \tilde \Phi(\tilde t)|}{|\nabla \tilde \Phi(\tilde t)|_{\tilde \scaled}} \right)^N \sum_{|\gamma'|=0}^N |\partial_{\tilde \scaled}^{\gamma'} \tilde \varphi (\tilde t)|. \label{ibp1}
\end{equation}
At this point, several simplifications are in order.  First, observe that $|\nabla \tilde \Phi(\tilde t)|_{\tilde \scaled}^2 = 2^{2l} 4 \pi^2 \alpha^2 + 2^{-2k} |\nabla \Phi(t)|_\scaled^2 > 1 + 2^{-2k} |\nabla \Phi(t)|_\scaled^2$.  Next, 
\[ \sum_{|\gamma|=2}^N | \partial_{\tilde \scaled}^\gamma \tilde \Phi(\tilde t)| = \sum_{|\gamma|=2}^N 2^{-k |\gamma|} | \partial_{\scaled}^\gamma \Phi(t)| \leq 2^{-2k} \sum_{|\gamma|=2}^N | \partial_{\scaled}^\gamma \Phi(t)|\]
(where $\gamma$ represents a multiindex of length $d+1$ on the left-hand side and length $d$ in the middle and on the right) because $\tilde \Phi(\tilde t)$ differs from $\Phi(t)$ by a linear term.  Finally,
\[ \sum_{|\gamma'|=0}^N |\partial_{\tilde \scaled}^{\gamma'} \tilde \varphi (\tilde t)| \leq C_N' \chi_{[-2,2]}(t_0) \sum_{|\gamma'|=0}^N |\partial_{\scaled}^{\gamma'} \varphi ( t)| \]
again by the Leibnitz rule and the compact support of $\varphi_0$.  Combining these three observations with the inequality \eqref{ibp1} and performing the (trivial) integral over $t_0$ first gives \eqref{ibp} if $k$ is chosen so that $2^{-k} \leq \epsilon \leq 2^{-k+1}$.
\end{proof}
The second idea to be used repeatedly throughout all that follows is contained in the proposition below.  In simplest terms, the result is that the integral of certain simple ratios (which appear will appear frequently) can be estimated by removing appropriate terms from the denominator and multiplying by an appropriate factor of two coming from the scale:
\begin{proposition}
For any multiindex $\gamma$ and any positive integer $N$ sufficiently large (depending only on $\gamma$ and the dimension), there is a constant $C_{N,\gamma}$ such that
\begin{equation} \int \frac{|t^\gamma|}{(|\tau| + |t|_\scaled)^N} dt\leq C_{N,\gamma} \frac{2^{-|\scaled| - \gamma \cdot \scaled}}{|\tau|^{N-d - |\gamma|}} \label{inteqn}
\end{equation}
for any scale $\scaled$ and any real $\tau$.  \label{intlemma}
\end{proposition}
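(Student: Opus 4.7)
The proposition is a self-contained, purely analytic estimate, so the plan is essentially to normalize the scale by one change of variables and the parameter $|\tau|$ by a second, reducing to a standard fact about $\int |u^\gamma|(1+|u|)^{-N}du$. I expect no real obstacle beyond bookkeeping of the scale factors.

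First, assume $\tau\neq 0$ (otherwise both sides are infinite for $N$ in the relevant range, so the inequality is vacuous). Perform the substitution $t_i = 2^{-\scaled_i} s_i$ for $i=1,\ldots,d$. Under this substitution the Jacobian contributes a factor $2^{-|\scaled|}$, the scaled norm becomes the ordinary Euclidean norm,
\[ |t|_\scaled^2 = \sum_{i=1}^d 2^{2\scaled_i}|t_i|^2 = \sum_{i=1}^d |s_i|^2 = |s|^2, \]
and the monomial factor transforms as $|t^\gamma| = 2^{-\gamma\cdot\scaled}|s^\gamma|$. Substituting these identities into the left-hand side of \eqref{inteqn} converts the integral to
\[ 2^{-|\scaled|-\gamma\cdot\scaled}\int \frac{|s^\gamma|}{(|\tau|+|s|)^N}\,ds, \]
so the claim reduces to the isotropic estimate $\int |s^\gamma|(|\tau|+|s|)^{-N}ds\leq C_{N,\gamma}|\tau|^{d+|\gamma|-N}$.

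Next, rescale $s = |\tau|u$, whose Jacobian is $|\tau|^d$ and under which $|s^\gamma| = |\tau|^{|\gamma|}|u^\gamma|$ and $|\tau|+|s| = |\tau|(1+|u|)$. Pulling these powers of $|\tau|$ outside leaves
\[ |\tau|^{d+|\gamma|-N}\int \frac{|u^\gamma|}{(1+|u|)^N}\,du. \]
By passing to polar coordinates the remaining integral is $\int_0^\infty \frac{r^{|\gamma|+d-1}}{(1+r)^N}\,dr$ (times an angular factor depending only on $\gamma$), which converges provided $N>d+|\gamma|$; this is precisely the condition ``$N$ sufficiently large depending only on $\gamma$ and the dimension.'' Calling the resulting finite quantity $C_{N,\gamma}$ and combining with the prefactor $2^{-|\scaled|-\gamma\cdot\scaled}$ from the first step yields \eqref{inteqn}.
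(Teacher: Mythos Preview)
Your proof is correct and follows essentially the same approach as the paper: both arguments reduce the inequality to the finiteness of $\int |u^\gamma|(1+|u|)^{-N}\,du$ for $N>d+|\gamma|$ via the change of variables $t_i\mapsto |\tau|\,2^{-\scaled_i}u_i$, the only difference being that you split this into two successive substitutions while the paper does it in one step.
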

\begin{proof}
The inequality \eqref{inteqn} follows immediately from a change of variables.  Changing $t_i \mapsto |\tau| 2^{-\scaled_i}$ for $i=1,\ldots,d$, the desired integral is equal to
\[ 2^{-|\scaled| - \gamma \cdot \scaled} |\tau|^{|\gamma| + d - N} \int \frac{|t^\gamma|}{(1 + |t|)^N} dt, \]
and this new integral is clearly finite when $N > |\gamma| + d$.
\end{proof}

\subsection{Fractional differentiation}

An essential component of theorem \ref{sobolevthm}.  Here it will be useful to develop nonisotropic versions of the standard Bessel potentials (found, for example in Stein \cite{steinsi}).  Since the operator \eqref{theop} is not actually homogeneous, however, there will be more than one natural choice of scaling to use in defining the nonisotropic Bessel potentials; not only that, it will be necessary to make certain estimates of these Bessel potentials using {\it conflicting} families of dilations.  For this reason, it is worthwile to proceed in nearly complete generality and work with a large family of potentials.

Recalling the fixed function $\varphi_0$ on the real line, let $\varphi_\Pi(\xi) := \prod_{i=1}^n \varphi_0(\xi_i)$ (clearly $\xi \in \R^n$).  Given any multiindex $\gamma$ (with strictly positive entries) and any complex number $s$ satisfying $\real{s} \geq 0$, consider the tempered distribution $J_{\gamma}^s$ whose Fourier transform is given by
\begin{equation} 
(J_{\gamma}^s)^\wedge(\xi) := \varphi_\Pi(\xi) + \sum_{j=1}^\infty 2^{s j} \left[ \varphi_\Pi(2^{- j \gamma} \xi ) - \varphi_\Pi(2^{-(j-1) \gamma} \xi) \right].  \label{fracdifdef}
\end{equation}
Note that when $s$ is real, $(J_\gamma^s)^\wedge(\xi)$ is nonnegative for all $\xi$ by the monotonicity conditions on $\varphi_0$.  A function $f$ on $\R^n$ will be said to belong to the space $L^{p}_{s,\gamma}(\R^n)$ provided that $||J_\gamma^s \star f||_p < \infty$.  When $s$ is real, $\gamma = \one := (1,\ldots,1)$ and $1 < p < \infty$, the Calder\'on-Zygmund theory of singular integrals guarantees that the space $L^p_{s,\gamma}(\R^n)$ is the usual Sobolev space.  More generally, the space $L^p_{s,\gamma}(\R^n)$ can be thought of as the space of functions which are differentiable to order $s /\gamma_i$ in the $i$-th coordinate direction.  It also follows that $\partial^l f \in L^p$, $1 < p < \infty$, provided that $l \cdot \gamma \leq s$.

As there are various scalings to be exploited in the proofs to follow, it is necessary to record the behavior of the distribution $J_\gamma^s$ when it is restricted to a box which has a potentially different scaling $\delta$ (that is, a box with side lengths approximately $2^{-\delta_i}$ for $i=1,\ldots,n$).  For this reason, consider the distribution obtained from multiplying $J_\gamma^s$ by the Schwartz function $\varphi_\Pi(2^\delta x)$.  The resulting distribution will be called $ J_{\gamma}^s |_{\delta}$; its Fourier transform is given by the convolution
\begin{equation} \left(  \left.  J_{\gamma}^s  \right|_{\delta} \right)^\wedge (\xi)  = 2^{- |\delta|} \int \hat \varphi_\Pi(2^{- \delta} (\xi - \eta)) (J_\gamma^s)^\wedge(\eta)  d \eta. \label{fracdifres}
\end{equation}
Now $\varphi_\Pi(2^{- j \gamma} \xi ) - \varphi_\Pi(2^{-(j-1) \gamma} \xi) = 0$ when $|2^{-j \gamma_i} \xi_i| \geq 1$ for some any value of $i$.  It follows that on the support of $\varphi_\Pi(2^{- j \gamma} \xi ) - \varphi_\Pi(2^{-(j-1) \gamma} \xi)$, $2^{\real{s} j} \leq (\frac{1}{2} |\xi_i|)^{\real{s}/\gamma_i}$.  Hence it follows that
\[ |(J_\gamma^s)^\wedge(\xi)| \leq 1 + \sum_{i=1}^n  2^{-\frac{\real{s}}{\gamma_i}} |\xi_i|^\frac{\real{s}}{\gamma_i}. \]
Inserting this inequality into \eqref{fracdifres} gives that
\[ \left|\left( \left. J_{\gamma}^s \right|_\delta \right)^\wedge(\xi) \right| \leq 2^{- |\delta|} \int |\hat \varphi_\Pi(2^{- \delta}(\xi - \eta))|  \left( 1 + \sum_{i=1}^n 2^{-\frac{\real{s}}{\gamma_i}}|\eta_i|^\frac{\real{s}}{\gamma_i} \right) d \eta. \]
Now when $\real{s} \geq 0$, $2^{-\real{s}/\gamma_i} |\eta_i|^{\real{s}/\gamma_i} \leq |\xi_i|^{\real{s}/\gamma_i} + |\eta_i - \xi_i|^{\real{s}/\gamma_i}$;  the result is that there exists a constant $C$ independent of $\delta$ and $\imaginary{s}$ such that
\begin{equation*}
 \left| \left(\left. J_{\gamma}^s \right|_{\delta} \right)^\wedge (\xi) \right| \leq C  \left( 2^{\real{s} \frac{\delta}{\gamma}} + \sum_{i=1}^n |\xi_i|^{\frac{\real{s}}{\gamma_i}} \right).  
\end{equation*}
The same procedure yields the more general family of inequalities
\begin{equation}
 \left| \partial_{\delta}^{l}  \left(\left. J_{\gamma}^s \right|_{\delta} \right)^\wedge (\xi) \right| \leq C \left( 2^{\real{s} \frac{\delta}{\gamma}} + \sum_{i=1}^n |\xi_i|^{\frac{\real{s}}{\gamma_i}} \right)  \label{diff1}
\end{equation}
where, again, the constant does not depend on $\delta$ or $\imaginary{s}$.  This inequality will be indispensible in applying the integration-by-parts lemma in the presence of a fractional differentiation which is not of the same sort of scaling as the rest of the integral.

The standard arguments appearing in the theory of regular homogeneous distributions guarantee that $J_\gamma^s -  J_\gamma^s|_\delta$ is a $C^\infty$ function which is, in fact, of rapid decay.  Let $\Delta_j(x)$ be the inverse Fourier transform of the difference $\varphi_{\Pi}(2^{-j \gamma} \xi)$.  The usual integration-by-parts arguments require that, for each positive integer $N$, there exists a constant $C$ such that $\Delta_0(x)| \leq C_{N,l} (1 + |x|^N)^{-1}$.  Rescaling, it follows that 
\[|\Delta_j(x)| \leq \frac{C_N 2^{j |\gamma|}}{1 + |x|_{j \gamma}^N}  \]
for the same constant $C_N$.  Let $\frac{\delta}{\gamma}$ be defined for any two multiindices $\delta$ and $\gamma$ of the same length to equal the maximum value of $\frac{\delta_i}{\gamma_i}$ as $i$ ranges over all entries.  Now for any multiindices $\delta$ and $\gamma$,
\begin{align*}
|x|_\delta & := \left( \sum_{i=1}^n |2^{\delta_i} x_i|^2 \right)^\frac{1}{2} = \left( \sum_{i=1}^n \left|2^{\left(\frac{\delta_i}{\gamma_i}-j  \right) \gamma_i} 2^{j \gamma_i} x_i\right|^2 \right)^\frac{1}{2} \\
& \leq 2^{ \left( \frac{\delta}{\gamma} - j \right) \min_i \gamma_i } |x|_{j \gamma}
\end{align*}
provided $j \geq \frac{\delta}{\gamma}$.  Therefore, taking the inverse Fourier transform of the right-hand side \eqref{fracdifdef} and integrating over the set of $x$'s where $|x|_{\delta} \geq \frac{1}{2}$ gives that
\[ \left|\left| J_\gamma^s - \left.  J_\gamma^s \right|_\delta \right|\right|_1 \leq C_{N} \left[ \sum_{0 \leq j \leq \frac{\delta}{\gamma}} 2^{\real{s}j + j \gamma \cdot l} + \sum_{j > \frac{\delta}{\gamma}} 2^{\real{s} j + j \gamma \cdot l + N (\frac{\delta}{\gamma} - j) \min_i \gamma_i} \right]. \]
Choosing $N$ sufficiently large guarantees that
\begin{equation} \left|\left| \left( J_\gamma^s - \left. J_\gamma^s \right|_\delta \right) \star f \right|\right|_p \leq C_{\gamma,\real{s}} 2^{\real{s} \frac{\delta}{\gamma}} ||f||_p  \label{diff2}
\end{equation}
for all $1 \leq p \leq \infty$, uniform in $\imaginary{s}$ and $\delta$.  When no confusion will arise, the convolution operators corresponding to convolution with $J_\gamma^s$ and $J_\gamma^s|_\delta$ will simply be written as $J_\gamma^s$ and $J_\gamma^s|_\delta$ (i.e., the star will be supressed).

\subsection{Main decomposition}
The time has now come to describe the decomposition of the operator \eqref{theop} which will be used to prove theorems \ref{lplqthm} and \ref{sobolevthm}.  The first step, as is easily imagined, is to decompose the support of the operator \eqref{theop} away from the origin $(x',x'',y') = (0,0,0)$ in a way that is consistent with the scalings of the homogeneity condition.  Given an amplitude $\psi$ supported near the origin, fix some smooth function $\varphi$ on $\R^{n'} \times \R^{n''} \times \R^{n'}$ which is identically one on the support of $\psi$ and is itself compactly supported.  Now let
\[ \psi_j (x,y') := \psi(x,y') (\varphi(2^{j \alpha} x, 2^{j \beta'} y') - \varphi(2^{(j+1) \alpha} x, 2^{(j+1) \beta'} y')) \]
and consider the following two families of operators:
\begin{align*} T_j f(x) & := \int f(y',x'' + S(x,y')) \psi_j(x,y') dy', \\
U_j f(x) & := \int f(y',x'' + S(x,y')) \psi(x,y') \varphi(2^{j \alpha} x, 2^{j \beta'}y') dy' = \sum_{l=j}^\infty T_j f(x).
\end{align*}
Clearly $T = \sum_{j=0}^\infty T_j$ suitably defined.  For example, if $f(y',y'')$ is a Schwartz function on $\R^n$ whose support is at a nonzero distance from the hyperplane $y' = 0$, then $Tf = \sum_{j=0}^\infty T_j f$ with convergence in the Schwartz space topology (and, in fact, only finitely many terms of the sum are nonzero).  This is because the supports of $T_j f$ and $U_j f$ are contained in a box of side lengths comparable to $2^{-j \alpha_i}$ for $i=1,\ldots,n$, and the supports in $y'$ of the cutofffs for both operators are similarly restricted to a box of sides $2^{-j \beta''_i}$ for $i=1,\ldots,n'$.

The operators $T_j$ will be further decomposed (according to a new family of dilations which is potentially in conflict with the one already used).  To that end, choose $\tilde \varphi$ to be a smooth function of compact support on $\R^{n''}$ which is supported in the Euclidean ball of radius $1$ and is identically one on the ball of radius $\frac{1}{2}$.  Now for any nonnegative integers $j,k$, let
\begin{align*}
(P_{jk} f)^\wedge (\xi',\xi'') & :=  \left[  \tilde \varphi ( 2^{-k-1} 2^{- j \beta''} \xi'') - \tilde \varphi ( 2^{-k} 2^{- j \beta''} \xi'')\right] \hat f(\xi',\xi''), \\
 (Q_j  f)^\wedge (\xi',\xi'') & := \tilde \varphi(2^{-j \beta''} \xi'') \hat f(\xi',\xi'').
\end{align*}
Observe that for fixed $k$, the operators $P_{jk}$ exhibit a scaling symmetry consistent with the homogeneity condition, but that for fixed $j$, the scaling is isotropic (and, hence, potentially conflicting).  Observe that $|\xi''|_{- j \beta''} \leq 1$ in the frequency support of $Q_j$ and $2^{k-1} \leq |\xi''|_{-j \beta''} \leq 2^{k+1}$ for $P_{jk}$, and that, for each $j$, the sum
\[ Q_j + \sum_{k=0}^\infty P_{jk} = I\]
where $I$ is the identity operator.  As with the $T_j$'s, this equation can be interpreted as saying $Q_j f + \sum_{j=0}^\infty P_{jk} f = f$  for any Schwartz function $f$ supported a finite distance away from the hyperplane $y' = 0$.  In this case, the convergence is in the Schwartz topology, and every term $Q_j f$ and $P_{jk} f$ retains the property that it is supported away from $y' = 0$.  

The main decomposition of the operator $T$, then,  will be the following sum over $j$ and $k$:
\begin{equation} T = \sum_{j=0}^\infty T_j Q_j + \sum_{j=0}^\infty \sum_{k=0}^\infty T_j P_{jk}. \label{decomp}
\end{equation}
At one point, it will also be necessary to use the summation-by-parts equality
\[\sum_{j=0}^{\infty} T_j Q_j = U_0 Q_0 + \sum_{j=1}^{\infty} U_j (Q_j - Q_{j-1}). \]
In the {\it a priori} sense, this equality is valid because of the finite summation-by-parts formula
\[\sum_{j=0}^{N} T_j Q_j = U_0 Q_0 - U_{N+1} Q_N +  \sum_{j=1}^{N} U_j (Q_j - Q_{j-1}) \]
coupled with the fact that $U_{N+1} Q_N f = 0$ for $N$ sufficiently large when $f$ is supported away from $y' = 0$.

Lastly, each of these decompositions remains valid (i.e., is defined in the {\it a priori} sense) if a fractional differentiation operator is applied on one or both sides (though if $J_{\gamma_R}^{s_R}$ is applied on the right, the test function $f$ must be chosen so that $J_{\gamma_R}^{s_R} f$ is supported away from $y'=0$ rather than $f$ itself).
\section{``Trivial'' inequalities}

This section contains the proofs of a variety of inequalities typically referred to as ``size'' or ``trivial'' inequalities, the reason being that the proofs of these inequalities typically do not depend on the geometry of $S$ in any real way, only on the size of the support of the cutoffs involed.  Of course, when fractional differentiations are added to the mix (as will be done shortly), oscillatory integral estimates and integration-by-parts arugments like lemma \ref{ibplemma} are necessary to establish even the trivial inequalities.

Before making this addition, though, it is necessary and worthwile to make a series of straightforward estimates which are not especially subtle in any way.  In light of the decomposition \eqref{decomp}, the indices $j$ and $k$ will be fixed from this point and through the next several sections to refer exclusively to the indices of summation in \eqref{decomp}.  Moreover, the following notation is adopted:  the expression $A \lesssim B$ will mean that there exists a constant $C$ such that, for all $j,k \geq 0$, $A \leq CB$ (and so $A \lesssim B$ is only meaningful if one or both sides depend on either $j$ or $k$).  If the expression $A$ or $B$ includes a fractional integration, the expression $A \lesssim B$ mean that $A \leq C B$ uniformly in $j$, $k$, and the imaginary parts of any fractional integration exponents.

With this notational device in hand, the first and most basic set of inequalities to establish is the following:
\begin{align}
||T_j Q_j  ||_{1 \rightarrow 1} & \lesssim 2^{-j |\alpha'|}, \label{trivial1} \\
||T_j P_{jk} ||_{1 \rightarrow 1} & \lesssim 2^{-j |\alpha'|}, \label{trivial2} \\
||T_j Q_j  ||_{\infty \rightarrow \infty} & \lesssim 2^{-j |\beta'|},  \label{trivial3} \\
||T_j P_{jk} ||_{\infty \rightarrow \infty} & \lesssim 2^{-j |\beta'|}, \label{trivial4} \\
||T_j Q_j  ||_{1 \rightarrow \infty} & \lesssim 2^{j |\beta''|}, \label{trivial5}\\
||T_j P_{jk} ||_{1 \rightarrow \infty} & \lesssim 2^{j |\beta''|+ kn''}. \label{trivial6} 
\end{align}
The unifying theme of these inequalities is that they are proved fairly directly from estimates of the size of the support of the amplitude $\psi_j$ appearing in the definition of $T_j$.  In fact, 
\begin{align*}
\int  |T_j f(x)| dx & \leq
 \int \! \! \! \int |f(y',x''+S(x,y')| |\psi_j(x,y')| dy' dx \\
& \leq \int \! \! \! \int \left( \int |f(y',x'')| dx'' \right) \sup_{x''} |\psi_j(x,y')| dx' dy' &
\lesssim 2^{-j |\alpha'|} ||f||_1
\end{align*}
since $2^{-j |\alpha'|}$ represents the size of the support of $\sup_{x''} |\psi_j(x,y')|$ in $x'$ (for fixed $y'$).  Similar reasoning gives that $||T_j||_{\infty \rightarrow \infty} \lesssim 2^{- j |\beta'|}$.  The Littlewood-Payley-type projections $Q_j$ and $P_{jk}$ are uniformly bounded on $L^p$ for all $p$ (since each $Q_j$ can be appropriately rescaled to $Q_0$ and each $P_{jk}$ to $P_{00}$); thus \eqref{trivial1}--\eqref{trivial4} follow.

The main observation behind the $L^1$-$L^\infty$ inequality is that Fubini's theorem guarantees that the following inequalities hold uniformly in $j$ and $k$:
\begin{align*} 
||Q_j||_{L^1 \rightarrow L^1_{y'} L^\infty_{y''}} & \lesssim 2^{j |\beta''|},  \\
||P_{jk}||_{L^1 \rightarrow L^1_{y'} L^\infty_{y''}} & \lesssim 2^{j |\beta''| + k n''}. 
\end{align*}
The justification for these estimates is that both $Q_j$ and $P_{jk}$ can be expressed as a convolution with a measure of smooth density on the hyperplane $x' = 0$.  The density is bounded by a constant times $2^{j |\beta''|}$ in the former case and $2^{j |\beta''| + kn''}$ in the latter, which can be seen by simply rescaling the operators $Q_j$ and $P_{jk}$ to coincide with $Q_0$ and $P_{00}$ as before.  From these facts and the definition of $T_j$, however,
\[ |T_j P_{jk}  f(x)| \lesssim 2^{j |\beta''| + k n''}  \int |\psi_j(x,y')| \int_{\R^{n''}} |f(y',y'')| dy'' dy' \lesssim 2^{j \beta'' + k n''} ||f||_1 \]
(and likewise for $T_j Q_j$).

\subsection{Fractional differentiation and $L^\infty$-$L^\infty_{x''} BMO^{\alpha'}_{x'}$ bounds}
In order to prove theorem \ref{sobolevthm}, it is absolutely essential to prove generalizations of \eqref{trivial2} and \eqref{trivial3} in the presence of fractional derivative operators (and \eqref{trivial1} and \eqref{trivial2} as well, but these are readily obtained from what is known about the dual operator $T^*$).  Moreover, to obtain a range of sharp results, it is necessary here just as in the work of Christ, Nagel, Stein, and Wainger \cite{cnsw1999} 
to be able to sum the corresponding estimates in a critical case (here, when there is no decay in $j$ of the norms of the individual terms).  For this reason, stating the inequality as an $L^1$-$L^1$ or $L^\infty$-$L^\infty$ bound is unsatsifactory; even the Calder\'on-Zygmund weak-$(1,1)$ bound is unsuccessful here (unlike in \cite{cnsw1999}) because its proof requires that a separate $L^p$-$L^p$ has already been established.  In general, the operators here are expected to be bounded on $L^p$ for a single value of $p$ in the critical case (because the rate of decay varies as $p$ varies unlike the translation-invariant case in which it is constant).

The solution is to directly prove a BMO-type inequality and appeal to analytic interpolation.  In this case, the operators in question may not even be bounded from $L^\infty$ to BMO, but they are bounded from $L^\infty$ to a mixed-norm space involving $L^\infty$ and a nonisotropic version of BMO.  The space will be designated $L^\infty_{x''} BMO^{\alpha'}_{x'}$, and is defined to be the space of functions $f$ for which there exists a constant $C_f$ such that, for almost every $x''$ and any box $B$ on $\R^{n'}$ with side lengths $2^{s \alpha'_i}$ for $i = 1,\ldots,n'$ ($s \in \R$),
\[\frac{1}{|B|} \int_B |f(x) - \left<f\right>_{B,x''}| dx' \leq C_f \]
where $ \left<f\right>_{B,x''} := \frac{1}{|B|} \int_B f(x',x'') dx'$.
 The inequality to be proved in this section, then, is that when $\real{s_L}, \real{s_R} \geq 0$ and $\real{s_L} \frac{\tilde \alpha}{\gamma_L} + \real{s_R} \frac{\beta}{\gamma_R} = |\beta'|$, then for every fixed $\epsilon > 0$,
\begin{align}  \left| \left| J_{\gamma_L}^{s_L} \left( \sum_{j=0}^\infty T_j Q_j \right) J_{\gamma_R}^{s_R} \right| \right|_{L^\infty \rightarrow L^\infty_{x''} BMO^{\alpha'}_{x'}}  & \lesssim  1, \label{bmoq} \\
\left| \left| J_{\gamma_L}^{s_L} \left( \sum_{j=0}^\infty T_j P_{jk} \right) J_{\gamma_R}^{s_R} \right| \right|_{L^\infty \rightarrow L^\infty_{x''} BMO^{\alpha'}_{x'}}  & \lesssim  2^{k (\real{s_L} \frac{\one}{\gamma_L} + \real{s_R} \frac{\one}{\gamma_R}+\epsilon)}, \label{bmop} 
\end{align}
uniformly in $k$, $\imaginary{s_L}$ and $\imaginary{s_R}$ (recall that $\frac{\delta}{\gamma} := \max_i \frac{\delta_i}{\gamma_i}$).
  
To prove \eqref{bmoq} and \eqref{bmop}, it is first necessary to revisit the ``trivial'' inequalities in the presence of fractional differentiation, as well as to introduce several new inequalities:
\begin{align}
 ||J_{\gamma_L}^{s_L} T_j Q_{j} J_{\gamma_R}^{s_R} ||_{\infty \rightarrow \infty} & \lesssim 1, \label{bmoineq1}\\
 ||J_{\gamma_L}^{s_L} T_j P_{jk} J_{\gamma_R}^{s_R} ||_{\infty \rightarrow \infty} & \lesssim 2^{ k ( \real{s_L} \frac{\one}{\gamma_L}  +  \real{s_R} \frac{\one}{\gamma_R})},
\label{bmoineq2} \\
 ||J_{\gamma_L}^{s_L} (\partial_{- j \alpha'}')^l T_j Q_{j} J_{\gamma_R}^{s_R} ||_{\infty \rightarrow \infty} & \lesssim 1,  \label{bmoineq3} \\
 ||J_{\gamma_L}^{s_L} (\partial_{-j \alpha'-k \one}')^l  T_j P_{jk} J_{\gamma_R}^{s_R} ||_{\infty \rightarrow \infty} & \lesssim 
2^{ k ( \real{s_L} \frac{\one}{\gamma_L}  +  \real{s_R} \frac{\one}{\gamma_R})},
 \label{bmoineq4} \\
||J_{\gamma_L}^{s_L} T_j Q_{j} J_{\gamma_R}^{s_R}  ||_{\infty \rightarrow L^\infty_{x''} L^1_{x'}} & \lesssim 2^{- j |\alpha'|}, \label{bmoineq5} \\
||J_{\gamma_L}^{s_L} T_j P_{jk} J_{\gamma_R}^{s_R} ||_{\infty \rightarrow L^\infty_{x''} L^1_{x'}} & \lesssim 2^{- j |\alpha'| + k ( \real{s_L} \frac{\one}{\gamma_L}  +  \real{s_R} \frac{\one}{\gamma_R})},
 \label{bmoineq6}
\end{align}
where $(\partial'_{-j \alpha'})^l$ represents a scaled, mixed derivative in only the single-primed directions (i.e., not in the double-primed directions).
The proofs of these inequalities are virtually identical because it will not be necessary to use the fact that $P_{jk}$ is cutoff away from small frequencies, which is the main qualitative feature distinguishing it from $Q_j$.  For this reason, the attention will be focused primarily on \eqref{bmoineq2}, \eqref{bmoineq4}, and \eqref{bmoineq6}.  In what follows, for the proofs of \eqref{bmoineq1}, \eqref{bmoineq3}, and \eqref{bmoineq5}, simply fix $k = 0$.

By \eqref{diff2}, it suffices to prove a modified form of \eqref{bmoineq1}-\eqref{bmoineq6}.  Specifically, it suffices to replace $J_{\gamma_L}^{s_L}$ by $J_{\gamma_L}^{s_L} |_{j \tilde \alpha + k \one}$ and $J_{\gamma_R}^{s_R}$ by $J_{\gamma_R}^{s_R} |_{j \beta + k \one}$; this is true by virtue of the identity
\begin{equation} J_{\gamma_L}^{s_L} f = \left.  J_{\gamma_L}^{s_L} \right|_{j \tilde \alpha + k \one} f + \left(  J_{\gamma_L}^{s_L} - \left.  J_{\gamma_L}^{s_L} \right|_{j \tilde \alpha + k \one} \right)  \left. J_{\gamma_L}^0 \right|_{j \tilde \alpha + k \one} f \label{restrid}
\end{equation}
(and likewise for $J_{\gamma_R}^{s_R}$) which is itself true because $\left. J_{\gamma_L}^0 \right|_{j \tilde \alpha + k \one}$ is the identity operator.  Therefore, one may assume without loss of generality that differential inequalities of the form \eqref{diff1} hold (which will be necessary to apply lemma \ref{ibplemma}). 

For convenience, let $V_{jk} f(x) := J_{\gamma_L}^{s_L}|_{j \tilde \alpha + k \one} T_j P_{jk} J_{\gamma_R}^{s_R}|_{j \beta + k \one} f(x)$.
The function $V_{jk} f(x)$ is given by integration against a kernel $K_{jk}(x,y)$, given by the expression
\begin{equation}
 \int e^{2 \pi i \left(\xi \cdot(x-w) + \eta' \cdot(z'-y') + \eta'' \cdot (w'' + S(w,z') - y'') \right)} \varphi_{jk}(\xi,\eta,w,z') d \xi d \eta dw dz', \label{bmoker1}
\end{equation}
where the amplitude function $\varphi_{jk}$ is equal to the product of several simpler pieces:  the cutoff $\psi_j(w,z')$ from the definition of $T_j$; the cutoff in $\eta''$ arising from $P_{jk}$, which happens to be supported on the set where $|\eta''|_{-j \beta''} \leq 2^{k+1}$;  and finally, the Fourier transforms $(J_{\gamma_L}^{s_L}|_{j \tilde \alpha + k \one})^\wedge(\xi)$ and $(J_{\gamma_R}^{s_R}|_{j \beta + k \one})^\wedge(\eta)$.  In the case of \eqref{bmoineq3} and \eqref{bmoineq4}, the amplitude that arises is slightly different.  This time the amplitude is given by
\begin{equation}
e^{- 2 \pi i \eta'' \cdot S(w,z')} (\partial_{- j \alpha' - k \one}')^l \left[ e^{2 \pi i \eta'' \cdot S(w,z')} \varphi_{jk} ( \xi, \eta,w,z') \right] \label{bmoamp2}
\end{equation}
with the derivative acting on the $w'$ variables only.  This new amplitude can, of course, be expressed as a finite linear combination of scaled $w'$-derivatives of $\varphi_{jk}$ times a finite number of scaled $w'$-derivatives of $\eta'' \cdot S(w,z')$ (a simple integration-by-parts is all that is necessary to turn the derivative in $x'$ to a derivative in $w'$).

At this point, the main piece of information needed to apply lemma \ref{ibplemma} is the scale $\scaled$ to be used.  To that end, choose scale ${-j \alpha'} - k \one$ in the $w'$ variable, $j \alpha' + k \one$ in the $\xi'$ variable, and $- j \beta' - k \one$ and $j \beta' + k \one$, respectively, in $z'$ and $\eta'$.  In the remaining directions, the scales chosen are $j \beta'' + k \one$ in $\xi''$ and $\eta''$ and $- j \beta'' - k \one$ in $w''$.  With respect to the chosen scale, all scaled derivatives of degree at least two of the phase \[\Phi_{x,y}(\xi,\eta,w,z') := 2 \pi  \left(\xi \cdot(x-w) + \eta' \cdot(z'-y') + \eta'' \cdot (w'' + S(w,z') - y'') \right)\] are bounded uniformly in $j$ and $k$; that is, for all $j,k$, $|\partial^{l}_S \Phi_{x,y} (\xi,\eta,w,z')| \leq C_{l}$ when $|l| \geq 2$.  This fact is a direct consequence of the uniform convergence of the scaled derivatives of $S$, as in \eqref{limits}, coupled with the fact that $\beta''_i > \alpha''_i$ for all $i$.  Likewise, the scaled derivatives of the cutoff $\varphi_{jk}$ are all uniformly bounded in $j$ and $k$ (and the imaginary parts of $s_L$ and $s_R$) by a constant times
\begin{equation} \left( 2^{\real{s_L} (j \frac{\tilde \alpha}{\gamma_L}+ k \frac{\one}{\gamma_L})} + \sum_{i=1}^n |\xi_i|^{\frac{\real{s_L}}{(\gamma_L)_i}} \right) \left( 2^{\real{s_R} (j \frac{\beta}{\gamma_R} + k \frac{\one}{\gamma_R})} + \sum_{i=1}^n |\eta_i|^\frac{\real{s_R}}{(\gamma_R)_i} \right) \label{normnum}
\end{equation}
and supported where $|w'|_{j \alpha'} \lesssim 1$, $|z'|_{j \beta'} \lesssim 1$ and $|\eta''|_{- j \beta'' - k \one} \lesssim 1$.  Note that this fact is also true of the scaled derivatives of the amplitude \eqref{bmoamp2} since the scaled derivatives of the phase $\eta'' \cdot S(w,z')$ are uniformly bounded.

The magnitude of the scaled gradient of $\Phi_{x,y}$, on the other hand, is greater than some fixed constant (independent of $j$ and $k$) times
\begin{align*} 
2^k(|x' - w'|_{j \alpha'} & +  |x'' - w''|_{j \beta''} +  |z'-y'|_{j \beta'} + |w''+S(w,z)-y''|_{j \beta''}) \\
& + 2^{-k} (|\xi'- \nabla_{w'} \eta'' \cdot S(w,z') |_{-j \alpha'} +  |\eta'+ \nabla_{z'} \eta'' \cdot S(w,z') |_{-j \beta'}) \\
& + 2^{-k} |\xi'' - \eta'' - \nabla_{w''} \eta'' \cdot S(w,z')|_{-j \beta''}.
\end{align*}
Again, since the scaled derivates of $\eta'' \cdot S(w,z')$ are uniformly bounded, there is a constant $C_0$ independent of $j$ and $k$ such that the magnitude of the scaled gradent is greater than $C_0$ times
\begin{align*}
2^k(|x' - w'|_{j \alpha'} & +  |x'' - w''|_{j \beta''} +  |z'-y'|_{j \beta'} + |w''+S(w,z)-y''|_{j \beta''})  \\
& + 2^{-k} (|\xi' |_{-j \alpha'} +  |\eta'|_{-j \beta'} + |\xi'' - \eta''|_{-j \beta''})- C_0.
\end{align*}
Choose $\epsilon < C_0^{-1}$ and apply the integration-by-parts argument of lemma \ref{ibplemma}.  The result is that the kernel $K_{jk}(x,y)$ (modulo a multiplicative constant independent of $j$ and $k$) is bounded from above by the integral over $\xi, \eta, w$ and $z'$ (suitably cut-off in $w'$, $z'$ and $\eta''$) of a fraction whose numerator is \eqref{normnum} and whose denominator is 
\begin{equation}
\begin{split}
2^{kN}(|x' - w'|_{j \alpha'} & +  |x'' - w''|_{j \beta''} +  |z'-y'|_{j \beta'} + |w''+S(w,z)-y''|_{j \beta''})^N  \\
& + 2^{-kN} (|\xi' |_{-j \alpha'} +  |\eta'|_{-j \beta'} + |\xi'' - \eta''|_{-j \beta''})^N +1
\end{split} \label{normdenom}
\end{equation}
for any fixed positive integer $N$.
To obtain the operator norm of $V_{jk}$ on $L^\infty$, the kernel $K_{jk}(x,y)$ must be integrated over $y$ and the supremum over all $x$ is taken.  
This integral can be estimated by using proposition \ref{intlemma} recursively:  performing the $y$ integral first, lemma \ref{intlemma} dictates that the $L^\infty$ operator norm of $V_{jk}$ is less than the integral over $\xi,\eta,w,z'$ (suitably cutoff in $w'$, $z'$ and $\eta''$) of a new fraction whose numerator is \eqref{normnum} times an additional factor of $2^{j |\beta|}$, but whose denominator is (modulo a multiplicative constant independent of $j$ and $k$)
\begin{equation}
\begin{split}
2^{kN_2}(|x' - w'|_{j \alpha'} & +  |x'' - w''|_{j \beta''})^{N_2}  \\
& + 2^{-kN_2} (|\xi' |_{-j \alpha'} +  |\eta'|_{-j \beta'} + |\xi'' - \eta''|_{-j \beta''})^{N_2} +1
\end{split} 
\end{equation}
for $N_2 := N - n$.
Proposition \ref{intlemma} is repeated for the integrals over $w$, $\xi$, and $\eta'$ (in the process, the triangle inequality $|\xi_i| \leq |\xi_i - \eta_i| + |\eta_i|$ is used when the $\xi''$ integral is encountered to make terms in the numerator match terms in the denominator).  After these integrations are complete, the denominator is trivial (assuming that $N$ was chosen sufficiently large). To conclude, the integrals over $\eta''$ and $z'$ are estimated using the size of the support of $\varphi_{jk}$ in these directions.  Collecting all the powers of $2$ encountered in this way gives precisely the inequalities \eqref{bmoineq1} - \eqref{bmoineq4}.

For the norm of $V_{jk}$ as a mapping from $L^\infty$ to $L^\infty_{x''} L^1_{x'}$, the kernel $K_{jk}$ is integrated in $x'$ and $y$ and the supremum over $x''$ is taken.  Just as before, proposition \ref{intlemma} is applied recursively.  This time the order of integration is $y$ followed by $x'$, then $w'$, $\xi$ and $\eta'$.  After these steps, the denominator is again trivial, and the remaining integrals over $w'$, $z'$ and $\eta''$ are carried out by computing the size of the support of $\varphi_{jk}$ in these directions.   Collecting powers of $2$ as before gives precisely the same result as above with the addition of another factor of $2^{-j |\alpha'|}$.

In light of \eqref{bmoineq1}-\eqref{bmoineq6}, the argument to establish \eqref{bmoq} and \eqref{bmop} proceeds as follows.  First observe that given any smooth function $f$ on $\R^n$ and any box $B \subset \R^{n'}$ of side lengths $2^{t \alpha'_i}$ for $i=1,\ldots,n'$ and some $t \in \R$, the following inequality holds: 
\[ \frac{1}{|B|} \int_B |f(x) - \left<f\right>_{B,x''}| dx' \leq 2 \min \left\{ 
\frac{1}{|B|} ||f||_{L^\infty_{x''} L^1_{x'}}, ||f||_\infty, \sum_{l=1}^{n'}  2^{s \alpha_i'} \left|\left|\frac{\partial f}{\partial x_i'} \right|\right|_\infty
\right\}. \]
The first two terms on the right-hand side follow from fairly straightforward applications of the triangle inequality.  The latter perhaps requires more explanation.  The triangle inequality guarantees that
\[ \frac{1}{|B|} \int_B |f(x) - \left<f\right>_{B,x''}| dx' \leq \frac{1}{|B|^2} \int_B \int_B |f(x',x'')- f(y',x'')| dx' dy', \]
and the fundamental theorem of calculus allows one to estimate the difference $|f(x',x'')- f(y',x'')|$ in terms of the gradient:
\begin{align*} |f(x',x'')- f(y',x'')| & = \left| \int_0^1 \frac{d}{d \theta} f( \theta x' + (1-\theta) y',x'') d \theta \right| \\
& \leq \sum_{i=1}^{n'} \int_0^1 |x_i'-y_i'| \left| \left(\frac{\partial f}{\partial x_i'} \right) (\theta x' + (1- \theta) y',x'') \right| d \theta \\
& \leq 2 \sum_{i=1}^{n'} 2^{t \alpha_i'} \left|\left|\frac{\partial f}{\partial x_i'} \right|\right|_\infty.
\end{align*}
For any bounded $g$, let $f := J_{\gamma_L}^{s_L} \left( \sum_{j=0}^\infty T_j  \Qj \right) J_{\gamma_R}^{s_R} g$.  The inequalities \eqref{bmoineq1}, \eqref{bmoineq3}, and \eqref{bmoineq5} give that
\[\frac{1}{|B|} \int_B |f(x) - \left<f\right>_{B,x''}| dx' \lesssim \sum_{j=0}^\infty \min \left\{ 2^{(j- t) |\alpha'|},1,\sum_{i=1}^{n'} 2^{(t-j) \alpha_i} \right\} ||g||_\infty \]
uniformly in $t$ and $||g||_\infty$, of course.  Summing in $j$ and taking the supremum over $B$ and $x''$ gives \eqref{bmoq}.  If instead one takes $f := J_{\gamma_L}^{s_L} \left( \sum_{j=0}^\infty T_j  P_{jk} \right) J_{\gamma_R}^{s_R} g$, the same reasoning gives that
\begin{align*}
\frac{1}{|B|}  \int_B & |f(x) -  \left<f\right>_{B,x''}| dx' \\  \lesssim & \sum_{j=0}^\infty 2^{ k (\real{s_L} \frac{\one}{\gamma_L} + \real{s_R} \frac{\one}{\gamma_R})} \min \left\{ 2^{(j- t) |\alpha'|},1,\sum_{i=1}^{n'} 2^{(t-j) \alpha_i} 2^k \right\} ||g||_\infty, 
\end{align*}
which yields \eqref{bmop} (in fact, it yields the slightly better inequality in which $2^{\epsilon k}$ is replaced by $\log(1+k)$).
\section{$L^2$-$L^2$ inequalities}
\subsection{Orthogonality inequalities}
The goal of this section is to prove the necessary orthogonality inequalities for the operators $T_j Q_j$ and $T_j P_{jk}$ on $L^2(\R^n)$.  As in the previous section, a number of slightly different inequalities are necessary, but the proofs of these inequalities are nearly indistinguishable.  The precise statement of these inequalities goes as follows:  Fix $s_L, \gamma_L, s_R, \gamma_R$ and a positive integer $M$.  Then for any positive integers $j_1,j,k$, if $|j-j_1|$ is sufficiently large (independent of the choices of $j_1,j,$ and $k$) then
\begin{align}
 || P_{\! j_1 k} J_{\gamma_L}^{s_L} T_j J_{\gamma_R}^{s_R} P_{j k}||_{2 \rightarrow 2} & \lesssim 2^{-(k+j+j_1)M} \label{ortho1} \\
|| (Q_{j_1} - Q_{j_1-1}) J_{l_1}^{s_1} U_j J_{l_2}^{s_2} (Q_{j} - Q_{j-1})||_{2 \rightarrow 2} & \lesssim 2^{-(j + j_1) M}.  \label{ortho2}
\end{align}
If, in addition, $j$ is sufficiently large, then it is also true that
\begin{align}
 \left|\left| \left( Q_{0} + \sum_{k_1 = 0}^k P_{0k} \right) J_{\gamma_L}^{s_L} T_j J_{\gamma_R}^{s_R} P_{j k}\right|\right|_{2 \rightarrow 2} & \lesssim 2^{-(k+j)M} \label{ortho3} \\
|| Q_{0} J_{\gamma_L}^{s_L} U_j J_{\gamma_R}^{s_R} (Q_{j} - Q_{j-1})||_{2 \rightarrow 2} & \lesssim 2^{-j M}. \label{ortho4}
\end{align}
Heuristically speaking, these inequalities assert that $P_{jk}$ effectively commutes with $T_j$ and $Q_j - Q_{j-1}$ likewise effectively commutes with $U_j$, so that, for fixed $k$, the terms of the decomposition \eqref{decomp} are effectively mutually orthogonal.  The advantage of this, of course, is that it is precisely what is needed to apply the Cotlar-Stein almost-orthogonality lemma to conclude that the operator norm on $L^2$ of the sum (for fixed $k$) is comparable to the supremum of the operator norms over $j$ (which is an absolutely necessary element of the proof of theorem \ref{sobolevthm}).

The proof to be given now is that of \eqref{ortho1}; all others are proved in a similar manner.  Let $V_{jk} := J_{\gamma_L}^{s_L} T_j J_{\gamma_R}^{s_R} P_{jk}$.  Conjugated by the Fourier transform, the operator $V_{jk}$ has a kernel (on frequency space) given by 
\[K_{jk} (\xi,\eta) := \int e^{2 \pi i \left(-\xi \cdot w + \eta' \cdot z' + \eta'' \cdot (w'' + S(w,z')) \right)} \varphi_{jk} (\xi,\eta,w,z') d w dz'\]
where, as before, $\varphi_{jk}$ is supported where $|\eta''|_{-j \beta'' - k \one} \lesssim 1$, $|w|_{j \alpha} \lesssim 1$, and $|z'|_{j \beta'} \lesssim 1$; additionally,
\[ |\varphi_{jk}(\xi,\eta,w,z')| \lesssim \left( 1 + \sum_{i=1}^n |\xi_i|^\frac{\real{s_L}}{(\gamma_L)_i} \right)  \left( 1 + \sum_{i=1}^n |\eta_i|^\frac{\real{s_R}}{(\gamma_R)_i} \right). \]

Let $j_m := \max \{j_1,j\}$.  To estimate the size of the kernel $K_{jk}$, a suitable scale $\scaled$ must be chosen.
Choose scale $-j_m \alpha'- \frac{1}{2} k \one$ for $w'$ and $-j_m \beta'- \frac{1}{2} k \one$ for $z'$, then choose scale $- j_m \alpha'' - \frac{1}{2} k \one$ for $w''$.  The family of phases $\Phi_{\xi,\eta} (w,z')$ satisfies
\begin{align*} 
|\nabla \Phi_{\xi,\eta}(w,z')|_\scaled \gtrsim &  2^{-\frac{k}{2}} ( |\xi'|_{-j_m \alpha'} +  |\eta'|_{-j_m \beta'} +  |\xi''-\eta''|_{- j_m \alpha''}) - C_0 2^{\frac{k}{2}} 
\end{align*}
for some constant $C_0$ independent of $j,j_1$, and $k$ (due to the uniform convergence of the scaled derivatives $S$ as in \eqref{limits}).  The quantity $\epsilon_1 := \min_{i} \beta''_i - \alpha''_i$ is strictly positive;  clearly $|\xi'' - \eta''|_{-j_m \alpha''} \geq 2^{\epsilon_1 j_m} |\xi'' - \eta''|_{-j_m \beta''}$.  Now it must either be the case that $|\xi''|_{-j_m \beta''} \gtrsim 2^{k}$ or $|\eta''|_{-j_m \beta''} \gtrsim 2^k$.  If the former is true (which occurs when $j_m = j_1$), then $|\eta''|_{-j_m \beta''} \leq 2^{-\epsilon_2 |j -j_1|}|\eta''|_{-j \beta''} \lesssim 2^{-\epsilon_2 |j -j_1| + k}$, where $\epsilon_2 := \min_i \beta''_i$.  On the other hand, if $j_m = j$, then $|\xi''|_{-j_m \beta''} \lesssim 2^{- \epsilon_2 |j - j_1| + k}$ (this is the case which occurs in \eqref{ortho3} and \eqref{ortho4}). By the triangle inequality, then, $|\xi''-\eta''|_{-j_m \alpha''} \gtrsim 2^{\epsilon_1 j_m + k}$ when $|j-j_1|$ is sufficiently large (for some bound uniform in $j$ and $k$).  It follows that, when $|j-j_1|$ is sufficiently large, the scaled gradient of the phase satsifes the improved inequality
\begin{align*} 
|\nabla \Phi_{\xi,\eta}(w,z')|_\scaled \gtrsim &  2^{-\frac{k}{2}} ( |\xi'|_{-j_m \alpha'} +  |\eta'|_{-j_m \beta'} +  |\xi''-\eta''|_{- j_m \alpha''}) + 2^{\epsilon_1 j_m + \frac{k}{2}}. 
\end{align*}

To compute the operator norm on $L^1$ associated to the kernel $K_{jk}$, apply lemma \ref{ibplemma} (and note that the scaled derivatives of $\varphi_{jk}$ with respect to $w$ and $z'$ are clearly bounded when the cutoff arises from the operators $T_j$ or $U_j$), then integrate over $\xi$ and take the supremum over $\eta$.  As in the previous section, proposition \ref{intlemma} is applied to the integral in $\xi$.  Next, the integrals in $w$ and $z'$ are estimated using the size of the support of $\varphi_{jk}$.  The fact that the scaled gradient of the phase has magnitude no smaller than a constant times $2^{\epsilon_1 j_m + \frac{k}{2}}$  gives that the $L^1$-operator norm is less than a constant times $2^{-M j_m - M k}$ for any fixed positive $M$ by taking $N$ sufficiently large in lemma \ref{ibplemma}.

The operator norm on $L^\infty$ is computed in a completely analogous way, integrating over $\eta$ and taking the supremum over $\xi$; the result is the same, i.e., the operator norm on $L^\infty$ can be made smaller than $2^{-M j_m - M k}$ provided $|j - j_1|$ is sufficiently large.  Finally, Riesz-Thorin interpolation gives \eqref{ortho1}.

\subsection{van der Corput inequalities}

In this section, the rank condition on the mixed Hessian \eqref{hessian} finally comes into play.  Let $r$ be the minimum value of the rank of \eqref{hessian} over $(x',x'',y') \neq (0,0,0)$ and $\eta'' \neq 0$.  The main inequalities to be proved in this section are that for $j$ sufficiently large, for any $s_L, \gamma_L$, $s_R,\gamma_R$ and any $z$ satisfying
$\real{z} = \frac{|\alpha'| + |\beta'|}{2} - \real{s_L} \frac{\tilde \alpha}{\gamma_L} - \real{s_R} \frac{\beta}{\gamma_R}$, it must be the case that
\begin{align}
|| 2^{jz} J_{\gamma_L}^{s_L} T_j  J_{\gamma_R}^{s_R} Q_{j}||_{2 \rightarrow 2} \lesssim & 1 \label{vdcq} \\
|| 2^{jz} J_{\gamma_L}^{s_L} T_j  J_{\gamma_R}^{s_R} P_{jk}||_{2 \rightarrow 2} \lesssim & 2^{- k \frac{r}{2} + k (\real{s_L} \frac{\one}{\gamma_L} + \real{s_R} \frac{\one}{\gamma_R})} \label{vdcp}
\end{align}
uniformly in $j$, $k$, $\imaginary{s_L}$, $\imaginary{s_R}$, and (of course) $\imaginary{z}$.  As before, the inequality \eqref{diff2} and the identity \eqref{restrid} allow one to replace the fractional derivatives by $J_{\gamma_L}^{s_L}|_{j \tilde \alpha + k \one}$ and $J_{\gamma_R}^{s_R} |_{j \beta + k \one}$ (in the case of \eqref{vdcq}, take $k=0$).  Note that the condition that $j$ be sufficiently large is the same as requiring that the cutoff $\psi$ of the operator \eqref{theop} is supported sufficiently near the origin, and so has no major effect on the potency of any of these inequalities.

It is first necessary to further localize the cutoffs $\psi_{j}(x,y')$.  To that end, let $\varphi_1, \ldots, \varphi_m$ be any finite partition of unity on the support of $\psi_0$.  Define 
\[ T_j^i f(x) := \int f(y',x'' + S(x,y')) \psi_j(x,y') \varphi_i(2^{j \alpha} x, 2^{j \beta'} y') dy'. \]
The inequalities \eqref{vdcq} and \eqref{vdcp} will be proved with $T_j$ replaced by $T_j^i$ for $i=1,\ldots,m$, then summed over $i$ to obtain the estimates originally desired.  To simplify notation, the index $i$ will be supressed and it will simply be assumed that the cutoffs $\psi_j$ are sufficiently localized around the points $(2^{-j \alpha} x_0, 2^{-j \beta'} y_0')$.

As is customary, the engine behind the proof is a $TT^*$ argument; that is, the operator norm on $L^2$ of the operator
\[ \left. J_{\gamma_L}^{s_L} \right|_{j \tilde \alpha + k \one} T_j ( \left. J_{\gamma_R}^{s_R} \right|_{j \beta + k \one} P_{jk} )^2 T_j^* \left. J_{\gamma_L}^{s_L} \right|_{j \tilde \alpha + k \one} \]
(and likewise for $Q_j$) will be computed.  Just as in the previous proofs, this operator is given by integration against a kernel $K_{jk}(x,y)$ which is itself expressed as an oscillatory integral with phase $\Phi_{x,y}(\xi,\eta,\nu,w,z,u',v')$ given by
\[ 2 \pi ( \xi \cdot (x-w) + \eta \cdot (z - y) + \nu' \cdot u' + \nu'' \cdot ( w'' - z'' + S(w,u'+v') - S(z,v'))  ) \]
and amplitude $\varphi_{jk}(x,y,\eta,\xi,\nu,w,z,u',v')$ which is a product of these factors: $\psi_j(w,u'+v')$ and $\overline{\psi_j(z,u')}$ from the definition of $T_j$; $(J_{\gamma_L}^{s_L}|_{j \tilde \alpha + k \one})^\wedge(\xi)$ and $\overline{(J_{\gamma_L}^{s_L}|_{j \tilde \alpha + k \one})^\wedge(\eta)}$;  finally $|(J_{\gamma_R}^{s_R}|_{j \beta + k \one})^\wedge(\nu)|^2$ and the modulus squared of the cutoff arising from $P_{jk}$.

Once again, lemma \ref{ibplemma} will be the main computational tool once a suitable scale $\scaled$ is chosen.  Choose scale $-j \tilde \alpha - k \one$ for $w$ and $z$ and the dual scale $j \tilde \alpha + k \one$ for $\xi$ and $\eta$. Choose $j \beta'+ k \one$ for $\nu'$ and $j \beta'' + k \one$ for $\nu''$, and choose $-j \beta' - k \one$ for $u'$ and $-j \beta'$ for $v'$.  The thing to notice about this choice of scale is that the derivatives in $v'$ do {\it not} have a factor of $2^{-k}$ in the scale.  In the language of lemma \ref{ibplemma}, it is this special, asymmetric case which leads to operator van der Corput-type bounds for the kernel $K_{jk}$.  Of course, this omission also means that one must take some extra care in analyzing the scaled derivatives of the phase.

As before, one expects the scaled derivatives of order $2$ or greater of the phase are uniformly bounded {\it except} for those various derivatives are taken exclusively in the $\nu''$ and $v'$ directions (since all other derivatives have enough factors of $k$ to balance the fact that $|\nu''|_{-j \beta''} \approx 2^k$).  As for these exceptional derivatives, the relevant portion of the phase is examined by breaking it into two pieces.
The first is the difference $\nu'' \cdot (S(w,u'+v') - S(w,v'))$.  The fundamental theorem of calculus provides the identity
\begin{align*}
\nu'' \cdot (S(w,u'+v') & - S(w,v')) =  \\ & \sum_{i=1}^{n'} 2^k 2^{j \beta'_i} u_i' \int_0^1 (2^{-k} \nu'') \cdot 2^{- j \beta'_i} \frac{\partial S}{\partial y_i'} (w, \theta u' + v') d \theta; 
\end{align*}
it follows immediately from differentiating this equality that all scaled derivatives of $\nu'' \cdot (S(w,u'+v') - S(w,v'))$ are bounded uniformly by a constant times $|u'|_{j \beta' + k \one}$ since $w, u'$, and $v'$ are restricted to be suitably small.  The second piece to examine is $\nu'' \cdot (w'' - z'' + S(w,v') - S(z,v'))$.  Again, the fundamental theorem of calculus gives that the scaled derivative of this piece with respect to $\nu''_i$ is simply equal to
\begin{align*}
2^{j \beta''_i + k}  & (w''_i - z''_i) +\\
& \sum_{l'=1}^{n'} 2^{j \alpha'_{l'}+k} (w'_{l'} - z'_{l'}) \int_0^1 2^{-j \alpha'_{l'} + j \beta''_i} \frac{\partial S_i}{\partial x'_{l'}} (\theta w + (1-\theta)z,v') d \theta + \\
& \sum_{l''=1}^{n''} 2^{j \beta''_{l''} +k } (w''_{l''} - z''_{l''}) \int_0^1 2^{-j \beta''_{l''} + j \beta''_i} \frac{\partial S_i}{\partial x''_{l''}} (\theta w + (1-\theta)z,v') d \theta
\end{align*} 
where the integrals in the $l'$ sum are uniformly bounded and the integrals in the $l''$ sum tend to zero uniformly as $j \rightarrow \infty$ (because of the uniform convergence \eqref{limits} in both cases and the fact that the entries of $\beta''$ strictly dominate those of $\alpha''$).  Similarly, the scaled derivative of this second piece with respect to $v'_i$ is equal to
\begin{align*}
& \sum_{l'=1}^{n'} 2^{j \alpha_{l'}' + k}(w'_{l'} - z'_{l'}) \int_0^1 2^{-j \beta'_{i} -  j \alpha_{l'}'-k} \nu'' \cdot \frac{\partial^2 S}{\partial x'_{l'} y'_i} (\theta w + (1-\theta)z,v') d \theta + \\
& \sum_{l''=1}^{n''} 2^{j \beta''_{l''} + k} (w''_{l''} - z''_{l''}) \int_0^1 2^{-j \beta'_{i} - j \beta''_{l''} - k} \nu'' \cdot \frac{\partial^2 S}{\partial x''_{l''} y_i'} (\theta w + (1-\theta) z,v') d \theta.
\end{align*}
As before, the second integral tends uniformly to zero as $j \rightarrow \infty$ by virtue of \eqref{limits} and the domination of $\alpha''$ by $\beta''$.  The first integral is, in the limit, an average of the $(l',i)$-entry of the mixed Hessian matrix $H^P$ over points near some fixed point $(x_0',x_0'',y_0',\nu_0'')$ (without loss of generality, one may localize in $\nu''$ with a finite partition of unity as was already done for the physical variables).
Fixing $\Phi_2 := \nu'' \cdot (w'' - z'' + S(w,v') - S(z,v'))$, the information just given about the derivatives of this second term may be written in matrix form as
\begin{equation}
\left[ 
\begin{array}{c}
2^{j \beta + k \one } \partial_{\nu''}  \Phi_2 \\
2^{j \alpha'} \partial_{v'}  \Phi_2
\end{array}
\right] = \left[ \begin{tabular}{c|c}
A & B \\
\hline
C & D \\
\end{tabular} \right]
\left[ \begin{array}{c} 2^{j \alpha' + k \one}(w'-z') \\
2^{j \beta'' + k \one}(w'' - z'')
\end{array}
\right] \label{matrixeqn}
\end{equation}
where $A$ has uniformly bounded entries, $B$ tends uniformly to an $n'' \times n''$ identity matrix, $C$ tends uniformly to an integral of the rescaled Hessian matrix \eqref{hessian}, and $D$ tends uniformly to zero.
  The rank condition on $H^P$ implies that there is an $r \times r$ submatrix of $C$ which is invertible (with coeffiecients of the inverse bounded uniformly in $j$ and $k$).  For simplicity, assume that this submatrix lies in the first $r$ rows and $r$ columns of the full matrix $C$.  For $j$ sufficiently large, then, the matrix in \eqref{matrixeqn} has an $(r + n'') \times (r + n'')$-invertible submatrix (which must contain $B$).  It follows that for some uniform constant $C$,
\begin{align*}
 |\nabla_{\nu''}  \Phi_{x,y}|_{j \beta'' + k \one} + |\nabla_{v'} & \Phi_{x,y}|_{-j \beta'}  \gtrsim |w''- z''|_{j \beta'' + k \one} + \sum_{i=1}^r 2^{j \alpha'_i + k} |w'_i - z'_i| \\ & - C |u'|_{j \beta' + k \one} - C \sum_{i=r+1}^{n'} 2^{j \alpha'_i + k} |w'_i - z'_i|;
\end{align*}
furthermore, differentiating the identities for $\nabla_{\nu''}  \Phi_{x,y}$ and $\nabla_{v'} \Phi_{x,y}$ likewise gives that all scaled derivatives of the phase (of fixed order) are bounded uniformly by some constant times $1 + |u'|_{j \beta' + k \one} + |\nabla_{\nu''}  \Phi_{x,y}|_{j \beta'' + k \one} + |\nabla_{v'} \Phi_{x,y}|_{-j \beta'} + \sum_{i=r+1}^{n'} 2^{j \alpha'_i + k} |w'_i - z'_i|$.  The full scaled gradient, however, has magnitude at least
\begin{align*}
& 2^k ( |x' - w'|_{j \alpha'} + |x''-w''|_{j \beta''} + |z' - y'|_{j \alpha'} + |z'' - y''|_{j \beta''}) \\
+ & 2^{-k} ( |\xi'|_{-j \alpha'} + |\xi''-\nu''|_{-j \beta''} + |\eta'|_{-j \alpha'} + |\eta''-\nu''|_{-j \beta''}) \\
+ & 2^{k} |u'|_{j \beta'} + 2^{-k} |\nu'|_{-j \beta'} + 2^{k} |w'' - z'' + S(w,v') - S(z,v')|_{j \beta''} \\
+ & |\nabla_{v'} \nu'' \cdot [S(w,v') - S(z,v')]|_{-j \beta'} - C.
\end{align*}
Restrict attention for the moment to the situation in which $|x_i' - y_i'|\leq 2^{-k - j \alpha_i'}$ for $i = r+1,\ldots,n'$.  In this case 
\[ 2^{j \alpha'_i + k} |w_i' - z_i'| \leq  2^{j \alpha'_i + k} |x_i' - w_i'| +  2^{j \alpha'_i + k} |y_i' - z_i'| + 1\]
for $i = r+1, \ldots, n'$.
Thus if one decreases the scales of $\nu''$ and $v'$ to equal $j \beta'' + (k-m)\one$ for $\nu''$ and $- j \beta' - m \one$, respectively, for some fixed $m$ suitably large (independent of $j$ and $k$ and the imaginary parts of $s_L$ and $s_R$), it follows that all scaled derivatives of the phase have magnitude at most $1 + |\nabla \Phi_{x,y}|_\scaled$ (up to a uniform multiple) and that the magitude of the scaled gradient is at least
\begin{align*}
& 2^k ( |x' - w'|_{j \alpha'} + |x''-w''|_{j \beta''} + |z' - y'|_{j \alpha'} + |z'' - y''|_{j \beta''}) \\
+ & 2^{-k} ( |\xi'|_{-j \alpha'} + |\xi''-\nu''|_{-j \beta''} + |\eta'|_{-j \alpha'} + |\eta''-\nu''|_{-j \beta''}) \\
+ & 2^{k} |u'|_{j \beta'} + 2^{-k} |\nu'|_{-j \beta'} +  2^{k} |w'' - z''|_{j \beta''} + 2^k |w'-z'|_{j \alpha'} - C.
\end{align*}

Now apply lemma \ref{ibplemma} and proposition \ref{intlemma} recursively as before.  Since the operator in question is self-adjoint, it suffices to compute its norm as a mapping on $L^1$, meaning that the kernel $K_{jk}$ should be integrated over $x$ and the supremum should be taken over $y$.  The integration over $x$, performed first, gives a factor of $2^{-j |\tilde \alpha| - k n}$ and reduces the denominator to
\begin{align*}
& 2^k( |z' - y'|_{j \alpha'} + |z'' - y''|_{j \beta''}) \\
+ & 2^{-k} ( |\xi'|_{-j \alpha'} + |\xi''-\nu''|_{-j \beta''} + |\eta'|_{-j \alpha'} + |\eta''-\nu''|_{-j \beta''}) \\
+ & 2^{k} |u'|_{j \beta'} + 2^{-k} |\nu'|_{-j \beta'} +  2^{k} |w'' - z''|_{j \beta''} + 2^k |w'-z'|_{j \alpha'} + 1
\end{align*}
(taken to a suitably large power).
Integration over $w$ produces an additional factor of $2^{-j |\tilde \alpha| - k n}$ and eliminates the terms involving $w$ on the last line.  Integration over $\xi$ then over $\eta$ both give factors of $2^{-j |\tilde \alpha| - k n} 2^{\real{s_L} (j \tilde \alpha / \gamma_L + k \one / \gamma_L)}$ (because of the growth of the cutoff $\varphi_{jk}$).  Integration in $z$ gives yet another factor of $2^{-j |\tilde \alpha| - k n}$.  Over $u'$, one gets an additional $2^{-j |\beta'| - k n'}$.  Integration over $\nu$ (using the finite support in $\nu''$) gives a factor of $2^{j |\beta| + k n} 2^{2 \real{s_R}(j \beta/\gamma_R + k \one / \gamma_R)}$.  Lastly, the integral over $v'$ gives a factor of $2^{-j |\beta'|}$ because of its finite support.  Altogether, this gives an operator norm less than some uniform constant times $2^{-j \real{z} -k n' + 2 k (\real{s_L} \one/\gamma_L + \real{s_r} \one/\gamma_R)}$ (recalling the condition on $z$).

Recall, however, that this estimate is derived under the assumption that  $|x_i' - y_i'|\leq 2^{-k - j \alpha_i'}$ for $i = r+1,\ldots,n'$.  To acheive this condition, one must consider truncations of the form $\chi_l J_{\gamma_L}^{s_L} T_j P_{jk} J_{\gamma_R}^{s_R}$, where $\chi_l$ is a multiplication operator restricting $x_i$ to a suitably small interval.  Now
\[ \left|\left| \sum_l \chi_l J_{\gamma_L}^{s_L} T_j P_{jk} J_{\gamma_R}^{s_R}\right|\right|_{2 \rightarrow 2} \lesssim \left( \sum_l || \chi_l J_{\gamma_L}^{s_L} T_j P_{jk} J_{\gamma_R}^{s_R}||_{2 \rightarrow 2}^2 \right)^\frac{1}{2} \]
by orthogonality of the truncated operators (because the truncation can, of course, be performed in a locally finite way).  The sum over $l$ has at most $C 2^{k (n' -r)}$ terms, yielding the estimates \eqref{vdcq} and \eqref{vdcp}.
%
%
\subsection{Application of the Cotlar-Stein lemma}
At this point, the inequalities \eqref{ortho1}--\eqref{ortho4} can be combined with \eqref{vdcq} and \eqref{vdcp} to show that, when $\real{s_L}, \real{s_R} \geq 0$ and $\frac{|\alpha'|+ |\beta'|}{2} - \real{s_L} \frac{\tilde \alpha}{\gamma_L} - \real{s_R} \frac{\beta}{\gamma_R} = 0$,
\begin{align}
\left|\left|J_{\gamma_L}^{s_L} \left( \sum_{j=0}^\infty T_j Q_j \right) J_{\gamma_R}^{s_R} \right| \right|_{2 \rightarrow 2} & \lesssim 1, \label{full22q} \\
\left|\left|J_{\gamma_L}^{s_L} \left( \sum_{j=0}^\infty T_j P_{jk} \right) J_{\gamma_R}^{s_R} \right| \right|_{2 \rightarrow 2} & \lesssim 2^{-k \frac{r}{2} + k ( \real{s_L} \frac{\one}{\gamma_L} + \real{s_R} \frac{\one}{\gamma_R})}, \label{full22p}
\end{align}
uniformly in $k$ and $\imaginary{s_L}$ and $\imaginary{s_R}$.

The proof is simply an application of the Cotlar-Stein almost-orthogonality lemma.  Let $R_{jk} := J_{\gamma_L}^{s_L}  T_j P_{jk}  J_{\gamma_R}^{s_R}$.  The Littlewood-Payley-type projections $P_{jk}$ ensure that $R_{j_1 k} R_{j_2 k}^* = 0$ when $|j_1 - j_2|$ is greater than some fixed constant (because the frequency supports are disjoint).  On the other hand, 
\begin{align*}
R_{j_1 k}^* R_{j_2 k} = R_{j_1 k}^* \left( Q_0 + \sum_{k_3 = 0}^k P_{0k} \right) R_{j_2 k} + \sum_{j_3=0}^\infty R_{j_1 k}^* P_{j_3k} R_{j_2 k}
\end{align*}
By \eqref{ortho3}, the first term has operator norm at most equal to some constant times $2^{-M(j_1 + j_2 + k)}$ provided that $j_1$ and $j_2$ are sufficiently large (independent of $k$,$\imaginary{s_L}$, and $\imaginary{s_R}$).  Likewise, when $|j_1 - j_3|$ is sufficiently large, each term in the sum has norm at most $2^{-M(j_1 + j_3 + k)}$ by \eqref{ortho1}.  On the other hand, when $|j_2 - j_3|$ is sufficiently large, the terms have norm at most $2^{-M(j_2 - j_3 + k)}$.  In fact, when $|j_1 - j_2|$ is sufficiently large, {\it both} of the previous two cases must occur if either one occurs separately.  Thus, for any value of $j_2$, it must be the case that
\[ \sum_{j_1=0}^\infty ||R_{j_1 k}^* R_{j_2 k} ||_{2 \rightarrow 2} + ||R_{j_1 k}^* R_{j_2 k} ||_{2 \rightarrow 2} \lesssim 2^{2 k ( \real{s_L} \frac{\one}{\gamma_L} + \real{s_R} \frac{\one}{\gamma_R})} \]
uniformly in $j_2$, $k$, and the imaginary parts of $s_L$ and $s_R$.  This gives \eqref{full22p} by the Cotlar-Stein lemma.

The proof of \eqref{full22q} proceeds in essentially the same manner after a (crucial) summation by parts.  In particular, 
\[\sum_{j=0}^{\infty} T_j Q_j = \sum_{j=0}^{\infty} (U_j - U_{j+1}) Q_j  = U_0 Q_0 + \sum_{j=1}^{\infty} U_j (Q_j - Q_{j-1}). \]
Now the operator $J_{\gamma_L}^{s_L} U_{0} Q_0 J_{\gamma_R}^{s_R}$ is clearly bounded on $L^2$ uniformly in the imaginary parts of $s_L$ and $s_R$ (the argument does not differ from that of \eqref{bmoineq1}).  Now let
\[ R_j  := J_{\gamma_L}^{s_L} U_j (Q_j - Q_{j-1}) J_{\gamma_R}^{s_R}. \]
As before, $R_{j_1} R_{j_2}^* = 0$ when $|j_1 - j_2|$ is sufficiently large.  But the identity
\begin{align*}
 R_{j_1}^* R_{j_2} & = R_{j_1}^* Q_0 R_{j_2} + \sum_{j_3=1}^\infty R_{j_1}^* (Q_{j_3} - Q_{j_3-1}) R_{j_2}
\end{align*}
and the inequalities \eqref{ortho2} and \eqref{ortho4} guarantee that each term has operator norm rapidly decaying in both $|j_1 - j_3|$ and $|j_2 - j_3|$ when $|j_1 - j_2|$ is sufficiently large.

\section{Interpolation and summation}

\subsection{$L^p-L^q$ inequalities}

In this section, the inequalities \eqref{trivial1}-\eqref{trivial6} and \eqref{vdcp} are combined to obtain the promised $L^p$-improving estimates for the averaging operator \eqref{theop}.  The key is to establish the restricted weak-type estimates at the vertices of the appropriate polygon in the Riesz diagram, then interpolate with the Marcinkiewicz interpolation theorem.  

To begin, consider the operator $\sum_j T_j Q_j$.  Riesz-Thorin interpolation of \eqref{trivial1} and \eqref{trivial3} gives that $T_j Q_j$ is bounded on $L^p$ with an operator norm at most some fixed constant times $2^{-j|\alpha'|/p - j |\beta'|/p'}$ where $\frac{1}{p} + \frac{1}{p'} = 1$.  Choose any such $p$, and for simplicity, let $\theta := \frac{1}{p}$.  Now for any two measurable sets $E$ and $F$,
\begin{align*}
\left| \int \chi_F(x)  \sum_{j=0}^\infty T_j Q_j \chi_E (x) dx \right| & \leq \\ C \sum_{j=0}^\infty \min \{ & 2^{-j (\theta |\alpha'| + (1-\theta)|\beta'| )}  |E|^\theta |F|^{1 - \theta}, 2^{j |\beta''|} |E||F| \}
\end{align*}
by $L^p$-boundedness of $T_j Q_j$ as well as $L^1-L^\infty$ boundedness coming from \eqref{trivial5}.  Now there is a single value of $j$ (call it $j_0$ and note that $j_0$ possibly negative and amost assuredly not an integer) for which the two terms appearing in the minimum on the right-hand side are equal.  Away from this special value $j_0$, the minimum must decay geometrically with a ratio that is independent of $|E|$ and $|F|$.  Therefore the sum of all terms with $j > j_0$ is dominated by some constant times the size of the term with $j = j_0$, and likewise for the terms with $j \leq j_0$.  Solving the equation
$|E|^{1-\theta} |F|^\theta 2^{j_0(|\beta''| + \theta |\alpha'| + (1-\theta)|\beta'|)} = 1$ and substituting gives that
\[ \left|\int \chi_F(x)  \sum_{j=0}^\infty T_j Q_j \chi_E (x) dx \right| \leq C' |E|^{\frac{\theta |\tilde \alpha| + (1-\theta) |\beta'|}{ \theta |\tilde \alpha| + (1-\theta) |\beta|}} |F|^{1-\frac{\theta |\beta''|}{ \theta |\tilde \alpha| + (1-\theta) |\beta|}}. \]
From here, varying $\theta \in [0,1]$, using the Marcinkiewicz interpolation theorem, and doing some arithmetic give that $\sum_j T_j Q_j$ maps $L^p$ to $L^q$ provided that
\[ \frac{|\beta|}{p} - \frac{|\tilde \alpha|}{q} = |\beta'| \]
and $1 < p < q < \infty$.

As for the operator $\sum_{jk} T_j P_{jk}$, the procedure is in principle the same.  First of all, the inequalities \eqref{trivial2}, \eqref{trivial6}, and \eqref{vdcp} (with $s_L = s_R = 0$) give that
\begin{align*}
\left| \int \chi_F(x)  \sum_{k=0}^\infty \sum_{j=0}^\infty T_j P_{jk} \chi_E (x) dx \right| & \leq \\ C \sum_{k=0}^\infty \sum_{j=0}^\infty \min \{  2^{j |\beta''| + k n''} & |E||F|,  2^{-j |\alpha'|} |E|, 2^{-j \frac{|\alpha'| + |\beta'|}{2} - k \frac{r}{2}} |E|^\frac{1}{2} |F|^\frac{1}{2} \}.
\end{align*}

\begin{figure}
\centering
\begin{pspicture}(-0.5,-0.5)(6.5,6)
\psline[linewidth=1.5pt,arrows=<->](0,6.25)(0,0)(6.25,0)

\psline[linewidth=1pt](3,3)(0,4.5)
\psline[linewidth=1pt](3,3)(4.5,0)
\psline[linewidth=1pt](3,3)(6.25,4.625)

\psline[linewidth=0.25pt](3.5,2)(3.5,3.25)
\psline[linewidth=0.25pt](4,1)(4,3.5)
\psline[linewidth=0.25pt](4.5,0)(4.5,3.75)
\psline[linewidth=0.25pt](5,0)(5,4)
\psline[linewidth=0.25pt](5.5,0)(5.5,4.25)
\psline[linewidth=0.25pt](6,0)(6,4.5)

\psline[linewidth=0.25pt](3.5,3.25)(2.2,3.4)
\psline[linewidth=0.25pt](4,3.5)(1.4,3.8)
\psline[linewidth=0.25pt](4.5,3.75)(0.6,4.2)
\psline[linewidth=0.25pt](5,4)(0,4.577)
\psline[linewidth=0.25pt](5.5,4.25)(0,4.885)
\psline[linewidth=0.25pt](6,4.5)(0,5.193)
\psline[linewidth=0.25pt](6.25,4.836)(0,5.5)
\psline[linewidth=0.25pt](6.25,5.143)(0,5.807)
\psline[linewidth=0.25pt](6.25,5.45)(0,6.114)
\psline[linewidth=0.25pt](6.25,5.757)(1.609,6.25)
\psline[linewidth=0.25pt](6.25,6.064)(4.459,6.25)

\psline[linewidth=0.25pt](2.2,3.4)(3.5,2)
\psline[linewidth=0.25pt](1.4,3.8)(4,1)
\psline[linewidth=0.25pt](0.6,4.2)(4.5,0)
\psline[linewidth=0.25pt](0,4.385)(4.071,0)
\psline[linewidth=0.25pt](0,3.923)(3.642,0)
\psline[linewidth=0.25pt](0,3.461)(3.213,0)
\psline[linewidth=0.25pt](0,2.999)(2.784,0)
\psline[linewidth=0.25pt](0,2.537)(2.355,0)
\psline[linewidth=0.25pt](0,2.075)(1.926,0)
\psline[linewidth=0.25pt](0,1.613)(1.497,0)
\psline[linewidth=0.25pt](0,1.151)(1.068,0)
\psline[linewidth=0.25pt](0,0.689)(0.639,0)
\psline[linewidth=0.25pt](0,0.227)(0.21,0)

\put(2.9,-0.4){$\displaystyle j$}
\put(-0.3,3.1){$\displaystyle k$}
\put(1.62,1.75){I}
\put(4.62,1.75){II}
\put(2.8,4.26){III}
\end{pspicture}
\caption{The heavy lines indicate the regions in which one term is smaller than the other two; the finer lines indicate where the appropriate operator norm is constant.}
\label{thefig}
\end{figure}
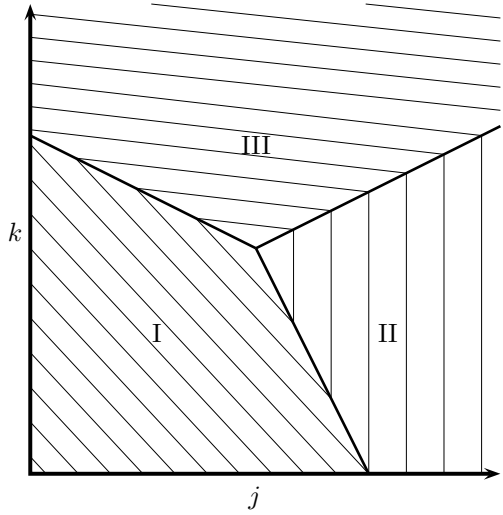

Now provided that $\frac{r}{n''} > \frac{|\alpha'| + |\beta'|}{|\beta''|}$, there is a unique pair of real numbers $j_0$ and $k_0$ at which the expression being summed attains a maximum.  See figure \ref{thefig} for a schematic illustration of the regions on which the first, second, and third term of the minimum, respectively, is the minimum.  Note that it is the condition on $\frac{r}{n''}$ which guarantees that the level lines of the operator norms (i.e., the lines where $j |\beta''| + k n''$ is constant in region I, $j |\alpha'|$ in region II, and $j \frac{|\alpha'| + |\beta'|}{2} - k \frac{r}{2}$ in region III) form closed triangles.  Now in each region, the operator norms decay geometrically as one moves away from $(j_0,k_0)$.  Furthermore, the number of terms of any fixed magnitude grows linearly with the distance from $(j_0,k_0)$.  Therefore, it is also true that the sum over all $j$ and $k$ is dominated by some constant times the value of the single term $j = j_0$, $k=k_0$.  At this particular point,
\[ 2^{j_0 |\tilde \alpha| + k_0 n''} |F| = 1 = 2^{j_0 \frac{|\alpha'| - |\beta'|}{2} - k_0 \frac{r}{2}} |E|^{- \frac{1}{2}} |F|^\frac{1}{2}; \]
solving gives $j_0 = \frac{ n'' \log_2 |E| - (n'' + r) \log_2 |F|}{|\tilde \alpha| r + (|\alpha'| - |\beta'|)n''}$ and $k_0 = \frac{-|\tilde \alpha|\log_2 |E| + |\beta| \log_2 |F|}{|\tilde \alpha| r + (|\alpha'| - |\beta'|) n''}$.  Substituting gives
\[ \left| \int \chi_F(x)  \sum_{k=0}^\infty \sum_{j=0}^\infty T_j P_{jk} \chi_E (x) dx \right| \leq C' |E|^{1- \frac{|\alpha'| n''}{|\tilde \alpha| r + (|\alpha'| - |\beta''|) n''}} |F|^\frac{|\alpha'| (n'' + r)}{|\tilde \alpha| r + (|\alpha'| - |\beta''|) n''}, \]
which gives precisely the vertex of the Riesz diagram circled in figure \ref{riesz} and lying above the line $\frac{1}{p} + \frac{1}{q} = 1$.  Performing the same procedure using \eqref{trivial4} for the second term instead of \eqref{trivial2} gives the second nontrivial vertex in figure \ref{riesz}.
\subsection{Sobolev inequalities}
To begin, observe that it suffices to replace the constraint \eqref{condition3} by the {\it a priori} stronger constraint that
\[ s \max \{ \alpha_1',\ldots,\alpha'_{n'},\beta''_1,\ldots,\beta''_{n''} \} \leq \frac{|\alpha'|}{p} + |\beta'|\left(1-\frac{1}{p} \right). \]
Suppose that $\alpha_j' > \beta_k''$.  Fix $\eta''_0 \in \R^{n''}$ to have $k$-th coordinate equal to $1$ and all other coordinates equal to zero; it follows from \eqref{homo1} and \eqref{limits} that the matrix $H^P(x',x'',y',\eta''_0)$ does not depend on $x_j'$.  Now fix $x'_0$ to have $j$'th coordinate equal to $1$ and all others zero.  The matrix $H^P(x'_0,0,0,\eta''_0) = H^P(0,0,0,\eta''_0)$ must have rank $r$, so there must be distinct indicies $l_1,\ldots,l_r$ and $m_1,\ldots,m_r$ (again distinct) such that $\alpha'_{l_1} + \beta'_{m_1} = \beta''_k$ and so on through $\alpha'_{l_r} + \beta'_{m_r} = \beta''_k$.  From this, it follows, however, that $\beta''_k < \frac{|\alpha'|+|\beta'|}{r}$.  Thus, if there were an $\alpha'_i$ greater than all entries of $\beta''$, the condition $\frac{r}{n''} > \frac{|\alpha'| + |\beta'|}{|\beta''|}$ could not hold.

Modulo this small change, theorem \ref{sobolevthm} follows somewhat more directly than do the $L^p$-$L^q$ inequalities.  Theorem 4 in chapter IV, section 5.2 of Stein \cite{steinha} is easily adapted to yield an analytic interpolation theorem for a an analytic family of operators $R_z$ where $R_{i \tau}$ maps $L^2$-$L^2$ (with operator norm bounded for all $\tau \in \R$) and $R_{1 + i \tau}$ maps $L^\infty$ to $L^\infty_{x''} BMO_{x'}^{\alpha'}$.  The key is to consider a partial sharp function of the form
\[ f^\sharp(x',x'') := \sup_B \int |f(x',x'') - \left<f\right>_{B,x''}| dx' \]
where $B$ ranges over all boxes in $\R^{n'}$ centered at $x'$ with appropriately nonisotropic side lengths.  The usual techniques (for example, a distributional inequality relating the sharp function to the associated maximal operator) demonstrate that
\[ \int |g(x',x'')|^p dx' \leq C_p \int |g^\sharp(x',x'')|^p dx' \]
(for a.e. $x''$) for some finite constant $C_p$ provided $p < \infty$; and Fubini's theorem guarantees that the coercivity inequality $||f||_p \leq C_p' ||f^\sharp||_p$ must hold for $p < \infty$ as well.  The linearization technique found in Stein \cite{steinha} now applies without further modification.  The result is that, for any fixed $2 \leq p < \infty$ and any real $s_L, s_R, \gamma_L,\gamma_R$ for which 
\[ \frac{|\alpha'|}{p} + |\beta'| \left( 1- \frac{1}{p} \right) = s_L \frac{\alpha'}{\gamma_L} + s_R\frac{\beta}{\gamma_R} \]
(taking $s_L$ and $s_R$ real) and any $\epsilon > 0$,
\begin{align*}
\left|\left| J_{\gamma_L}^{s_L} \left( \sum_{j=0}^\infty T_j Q_j \right) J_{\gamma_R}^{s_R} \right| \right|_{p \rightarrow p} & <  \infty\\
\left|\left| J_{\gamma_L}^{s_L} \left( \sum_{j=0}^\infty T_j Q_j \right) J_{\gamma_R}^{s_R} \right| \right|_{p \rightarrow p} & \lesssim  2^{k ( - \frac{r}{p} + s_L \frac{\one}{\gamma_L} + s_R \frac{\one}{\gamma_R} + \epsilon) }
\end{align*}
uniformly in $k$.  Fixing $s_R = 0$, for example, it follows that the fractional differentiation $J_{\gamma_L}^{s_L}$ applied to the sum \eqref{decomp} (summed over $j$ first, then $k$) converges in the strong operator topology provided that $\frac{r}{p} > s_L \frac{\one}{\gamma_L}$ and where $\frac{|\alpha'|}{p} + |\beta'| ( 1- \frac{1}{p} ) = s_L \frac{\alpha'}{\gamma_L}$.  Taking $\gamma_L = \one$ gives boundedness of $T$ from $L^p$ to $L^p_s$ for $p \geq 2$ as stated in theorem \ref{sobolevthm}.  The inequalities for $p \leq 2$ are proved by duality:  when $s_L = 0$ instead, fixing $\gamma_R = \one$ and $\frac{|\alpha'|}{p} + |\beta'| ( 1- \frac{1}{p} ) = s_R \frac{\alpha'}{\gamma_R}$, and $\frac{r}{p} > s_R \frac{\one}{\gamma_R}$ give that $T$ is bounded from $L^p_{-s_R}$ to $L^p$, so $T^*$ must map $L^{p'}$ to $L^{p'}_{s_R}$.  Since $T^*$ satisfies all the same homogeneity and rank conditions (with the roles of $\alpha'$ and $\beta'$ suitably interchanged), the portion of theorem \ref{sobolevthm} for $p \leq 2$ follows from this estimate just derived for dual operators $T^*$. 

\subsection{Necessity}
Necessity is shown by means of a Knapp-type example.  Consider the condition \eqref{condition1} first.  Let $E_s$ be a box in $\R^n$ with side lengths $2^{\beta_i t}$ for $i=1,\ldots,n$, and let $F_s$ be a box in $\R^n$ with side lengths $2^{\tilde \alpha_i t}$ (here $t$ is, of course, real).  Consider the integral
\[ \int \chi_{F_t}(x) T \chi_{E_t} (x) dx \]
For all $s$ sufficiently negative, the homogeneity conditions guarantee that the quantity $\chi_{E_s}(y',x''+S(x,y'))$ is identically one provided that $(x',x'') \in \epsilon E_t$ and $(y',x'') \in \epsilon F_t$ for some fixed constant $\epsilon > 0$ (here $\epsilon E_t$ is the set $E_t$ scaled linearly and isotropically down by a factor of $\epsilon$).  It follows that, when $\psi$ is greater than $\frac{1}{2}$ near the origin, one has
\[ \int \chi_{F_t}(x) T \chi_{E_t} (x) dx \geq \frac{1}{2} \epsilon^{2n' + n''} 2^{t(|\alpha'| + |\beta'| + |\beta''|)}, \]
and taking the limit as $t \rightarrow - \infty$, it follows that 
\[ \left| \int \chi_{F_t}(x) T \chi_{E_t} (x) dx \right| \leq C |E_t|^\frac{1}{p} |F_t|^{1 - \frac{1}{q} } \]
can hold uniformly for all $s$ only if $\frac{|\beta|}{p} - \frac{|\tilde \alpha|}{q} \leq |\beta'|$ (i.e., \eqref{condition1} must be satisfied for any appropriate choice of $S$).

As for condition \eqref{condition3}, standard arguments give that, when $T$ maps $L^p$ to $L^p_s$ for $1 < p < \infty$ and $s > 0$, one has
\[ ||P^\lambda_i T||_{p \rightarrow p} \leq C_p \lambda^{-s} \]
uniformly in $\lambda$, where 
\[(P^\lambda_i f)^\wedge(\xi) = \psi(\lambda^{-1} \xi''_i) \hat f(\xi) \]
for any smooth $\psi$ supported in $[-2,-1] \cup [1,2]$; choose $\psi$ to be nonnegative as well.  Now consider the integral
\[ \int (P^\lambda_i \chi_{F_t})(x) \int \chi_{E_t}(y',x''+ S(x,y')) \psi(x,y') dy' dx. \]
  Choosing $\lambda = \epsilon 2^{-t \beta''_i}$ for some fixed, small $\epsilon$, the function $(P^\lambda_i \chi_{F_t})(x)$ will be larger than some small constant $\epsilon'$ times the characteristic function $\chi_{F_t}(x)$ provided that $|x''_i| \leq 2^{t \beta''_i+1}$ (which is true of the support of $T \chi_{E_t}$ when $t$ is sufficiently negative).  Thus, Sobolev boundedness implies that 
\[ \epsilon' 2^{t (|\alpha'| + |\beta'| + |\beta''|)} \leq C' 2^{t s \beta''_i} 2^{|\beta|/p + |\tilde \alpha|(1-1/p)} \]
for all $t < 0$; letting $t \rightarrow - \infty$ and taking a supremum over $i$ gives \eqref{condition3}.

\section{Genericity considerations}

Suppose $M$ is an $n' \times n'$ matrix of rank $r$.  Transposing the order of rows and columns as necessary, it may be assumed that $M$ has the following block form:
\[ \left[
\begin{tabular}{c|c}
& \\
\hspace{5pt} A \hspace{5pt} & B \\
& \\
\hline
C & D\\
\end{tabular}
\right], \]
where $A$ is an $r \times r$ invertible submatrix, and $B$, $C$, and $D$ are $r \times (n'-r)$, $(n'-r) \times r$, and $(n'-r) \times (n'-r)$ submatrices, respectively.  The usual row-reduction arguments guarantee that 
$D - C A^{-1} B = 0$ for the matrix $M$.  Furthermore, this equation continues to be satisfied for all small perturbations of $M$ which are also rank $r$ matrices (where $A,B,C,D$ are, of course, replaced by their perturbations).  Suppose that ${\cal M}$ is some smooth mapping from a neighborhood of the origin in $\R^k$ into the space of $n' \times n'$ matrices such that ${\cal M}_0$ (that is, the matrix to which the origin maps) is equal to $M$.  The implicit function theorem, then, guarantees that the codimension (in $\R^k$) of the set of points near the origin mapping to a matrix of rank $r$ is at least equal to the rank of the differential of ${\cal M}$ at the origin minus $n'^2 - (n'-r)^2$.

Now let ${\cal P}^l_{\alpha',\alpha'',\beta'}$ be the space of polynomials $p$ in $x',x''$ and $y'$ (as always, $x', y' \in \R^{n'}$ and $x'' \in \R^{n''}$) for which $p(2^{\alpha'}x',2^{\alpha''}x'',2^{\beta'}y') = 2^l p(x',x'',y')$.  For conveinence, the subscripts $\alpha',\alpha'',$ and $\beta'$ will be supressed as these multiindices are considered ``fixed.''  Now given any multiindex $\beta''$, consider the following mapping from ${\cal P}^{\beta''_1} \times \cdots \times {\cal P}^{\beta''_{n''}} \times \R^{2n}$ into the space of $n' \times n'$ matrices given by
\begin{equation} (p_1,\ldots,p_{n''},x,y',\eta'') \mapsto \left( \left. \frac{\partial^2}{\partial x_i' y_j'} \right|_{x,y'} \sum_{k=1}^{n''} \eta''_k p_k \right)_{i,j=1,\ldots,n'}. \label{mapping}
\end{equation}
The goal is to compute the codimension in the space of ``pairings'' of polynomials and points, i.e., $(p_1,\ldots,p_{n''},x,y',\eta'')$, of those whose mixed Hessian has rank $r$.  In particular, if the codimension is large enough, then for a generic choice of polynomials $(p_1,\ldots,p_{n''})$ there will be no point (aside from the origin) at which th mixed Hessian has low rank.

To compute the rank of the differential of this map, it suffices by rescaling to assume that the coordinates of $x',y',x''$ and $\eta''$ are either $0$ or $1$; and of course one may assume that $\eta'' \neq 0$ and that at least one of $x'$, $x''$, or $'y'$ is also nonzero.  

Let $K_1$ be the least common multiple of all the entries of $\alpha', \alpha''$ and $\beta'$. Let $\Lambda$ be the set of positive integers $m$ which can be expressed as a sum $m = \alpha_i' + \beta_j'$ for some indices $i$ and $j$.  Now for any nonnegative integer $k$, 
\[\sum_{l \in \Lambda + k K_1} \# \set{(i,j)}{K_1 \mbox{ divides } l - \alpha_i' - \beta_j'} = (n')^2. \]
Fixing $K_2$ to be the cardinality of $\Lambda$, it follows that for at least one value of $l \in \Lambda + k K_1$, there are at least $K_2^{-1} (n')^2$ pairs of indices $(i,j)$ for which $K_1$ divides $l - \alpha_i' - \beta_j'$ (and therefore, $\alpha_m', \beta'_m$ and $\beta''_m$ divide this difference as well for all appropriate values of $m$).   It will now be shown that the rank of the differential of \eqref{mapping} is at least equal to $K_2^{-1} (n')^2$ provided that all the entries of $\beta''$ are congruent to some element of $\Lambda$ modulo $K_1$.

Suppose that $\beta''$ is as described, i.e., the entries of $\beta''$ are all congruent to some element of $\Lambda$ modulo $K_1$.  Suppose that $\eta''_{k_0} \neq 0$.  For any pair of indices $(i,j)$ such that $\beta''_{k_0} - \alpha_i' - \beta_j'$ is divisible by $k$, it must be the case that there is a monomial in ${\cal P}^{\beta''_{k_0}}$ of the form $x_l'^{p} x_i' y_j'$, $x_l''^{p} x_i' y_j'$ and $y_l'^p x_i' y_j'$ for any indices $l$ and appropriate values of $p$ in each case.  If $x_i'$ happens to be nonzero, then differentiating the $k_0$-th polynomial of \eqref{mapping} in the direction of the monomial $x_i'^{p+1} y_j'$ gives a matrix whose only nonzero entry is its $(i,j)$-entry.  Likewise, if $y_j'$ is nonzero, differentiation in the direction of $x_i' y_j'^{p+1}$ gives a matrix with only the $(i,j)$-entry nonzero.  Finally, if both $x_i'$ and $y_j'$ are zero, then differentiating in the direction of one of the remaining monomials $x_l'^{p} x_i' y_j'$, $x_l''^{p} x_i' y_j'$ or $y_l'^p x_i' y_j'$ for which $x_l'$, $x_l''$ or $y_l'$ is nonzero also gives a matrix with only the $(i,j)$-entry nonzero.

It follows that the codimension of points in ${\cal P}^{\beta''_1} \times \cdots \times {\cal P}^{\beta''_{n''}} \times \R^{2n}$ which have mixed Hessians of rank $r$ is at least $(n'-r)^2 - (1-K_2^{-1})(n')^2$ provided that the entries of $\beta''$ satisfy the congruence condition.  If this codimension is greater than $2n$, then it follows from projecting onto the space ${\cal P}^{\beta''_1} \times \cdots \times {\cal P}^{\beta''_{n''}}$ that such $n''$-tuples of polynomials generically have mixed Hessians with rank everywhere (except the origin) greater than $r$.
Thus, whenever $$r < n' - \sqrt{(1-K_2^{-1})(n')^2 + 2n},$$ the mixed Hessians \eqref{hessian} have rank everywhere equal to $r$ or greater (except at the origin).

\bibliography{mybib}

\def\cprime{$'$}
\providecommand{\bysame}{\leavevmode\hbox to3em{\hrulefill}\thinspace}
\providecommand{\MR}{\relax\ifhmode\unskip\space\fi MR }
\providecommand{\MRhref}[2]{%
  \href{http://www.ams.org/mathscinet-getitem?mr=#1}{#2}
}
\providecommand{\href}[2]{#2}
\begin{thebibliography}{10}

\bibitem{bos2002}
Jong-Guk Bak, Daniel~M. Oberlin, and Andreas Seeger, \emph{Two endpoint bounds
  for generalized {R}adon transforms in the plane}, Rev. Mat. Iberoamericana
  \textbf{18} (2002), no.~1, 231--247.

\bibitem{cnsw1999}
Michael Christ, Alexander Nagel, Elias~M. Stein, and Stephen Wainger,
  \emph{Singular and maximal {R}adon transforms: analysis and geometry}, Ann.
  of Math. (2) \textbf{150} (1999), no.~2, 489--577.

\bibitem{cuccagna1996}
Scipio Cuccagna, \emph{Sobolev estimates for fractional and singular {R}adon
  transforms}, J. Funct. Anal. \textbf{139} (1996), no.~1, 94--118.

\bibitem{greenblatt2005}
Michael Greenblatt, \emph{Sharp {$L\sp 2$} estimates for one-dimensional
  oscillatory integral operators with {$C\sp \infty$} phase}, Amer. J. Math.
  \textbf{127} (2005), no.~3, 659--695.

\bibitem{gpt2007}
Allan Greenleaf, Malabika Pramanik, and Wan Tang, \emph{Oscillatory integral
  operators with homogeneous polynomial phases in several variables}, J. Func.
  Anal. \textbf{244} (2007), no.~2, 444--487.

\bibitem{gs1994}
Allan Greenleaf and Andreas Seeger, \emph{Fourier integral operators with fold
  singularities}, J. Reine Angew. Math. \textbf{455} (1994), 35--56.

\bibitem{gs1998}
\bysame, \emph{Fourier integral operators with cusp singularities}, Amer. J.
  Math. \textbf{120} (1998), no.~5, 1077--1119.

\bibitem{lee2003}
Sanghyuk Lee, \emph{Endpoint {$L\sp p-L\sp q$} estimates for degenerate {R}adon
  transforms in {${\mathbb R}\sp 2$} associated with real-analytic functions},
  Math. Z. \textbf{243} (2003), no.~4, 817--841, Corrected reprint of Math. Z.
  {\bf 243} (2003), no. 2, 217--241 [MR1961865].

\bibitem{lee2004}
\bysame, \emph{Endpoint {$L\sp p$}-{$L\sp q$} estimates for some classes of
  degenerate {R}adon transforms in {$\mathbb R\sp 2$}}, Math. Res. Lett.
  \textbf{11} (2004), no.~1, 85--101.

\bibitem{ps1986I}
D.~H. Phong and E.~M. Stein, \emph{Hilbert integrals, singular integrals, and
  {R}adon transforms. {I}}, Acta Math. \textbf{157} (1986), no.~1-2, 99--157.

\bibitem{ps1986II}
\bysame, \emph{Hilbert integrals, singular integrals, and {R}adon transforms.
  {II}}, Invent. Math. \textbf{86} (1986), no.~1, 75--113.

\bibitem{ps1997}
\bysame, \emph{The {N}ewton polyhedron and oscillatory integral operators},
  Acta Math. \textbf{179} (1997), no.~1, 105--152.

\bibitem{pss2001}
D.~H. Phong, E.~M. Stein, and Jacob Sturm, \emph{Multilinear level set
  operators, oscillatory integral operators, and {N}ewton polyhedra}, Math.
  Ann. \textbf{319} (2001), no.~3, 573--596.

\bibitem{py2004}
Malabika Pramanik and Chan~Woo Yang, \emph{Decay estimates for weighted
  oscillatory integrals in {${\mathbb R}\sp 2$}}, Indiana Univ. Math. J.
  \textbf{53} (2004), no.~2, 613--645.

\bibitem{rychkov2001}
Vyacheslav~S. Rychkov, \emph{Sharp {$L\sp 2$} bounds for oscillatory integral
  operators with {$C\sp \infty$} phases}, Math. Z. \textbf{236} (2001), no.~3,
  461--489.

\bibitem{seeger1998}
Andreas Seeger, \emph{Radon transforms and finite type conditions}, J. Amer.
  Math. Soc. \textbf{11} (1998), no.~4, 869--897.

\bibitem{steinsi}
Elias~M. Stein, \emph{Singular integrals and differentiability properties of
  functions}, Princeton Mathematical Series, No. 30, Princeton University
  Press, Princeton, N.J., 1970.

\bibitem{steinha}
\bysame, \emph{Harmonic analysis: real-variable methods, orthogonality, and
  oscillatory integrals}, Princeton Mathematical Series, vol.~43, Princeton
  University Press, Princeton, NJ, 1993, With the assistance of Timothy S.
  Murphy, Monographs in Harmonic Analysis, III.

\bibitem{tw2003}
Terence Tao and James Wright, \emph{{${L}\sp p$} improving bounds for averages
  along curves}, J. Amer. Math. Soc. \textbf{16} (2003), no.~3, 605--638.

\end{thebibliography}

\end{document}